 \newtheorem{theorem}{Theorem}[section]
 \newtheorem{cor}[theorem]{Corollary}
 \newtheorem{lemma}[theorem]{Lemma}
 \newtheorem{proposition}[theorem]{Proposition}
\theoremstyle{definition}
\theoremstyle{remark}
 \newtheorem{remark}[theorem]{Remark}
 \newtheorem{exa}[theorem]{Example}
\numberwithin{equation}{section}
\title{Rational $q\times q$~\Car{} Functions and Central Non-negative \tH{} Measures}
\author{Bernd Fritzsche \and Bernd Kirstein \and Conrad M\"adler}
\begin{document}
\maketitle

\begin{abstract}
 We give an explicit representation of central measures corresponding to finite \tTnnd{} sequences of complex \tqqa{matrices}. Such measures are intimately connected to central \tqqa{\Car{}} functions. This enables us to prove an explicit representation of the non-stochastic spectral measure of an arbitrary multivariate autoregressive stationary sequence in terms of the covariance sequence.
\end{abstract}

\begin{description}
 \item[Mathematical Subject Classification (2000)] Primary 30E05, 60G10, Secondary 42A70
\end{description}

\section{Introduction}
 If \(\kappa\) is a \tnn{} integer or if \(\kappa=\infty\)\index{k@$\kappa$}, then a sequence \(\Cbk\) of complex \tqqa{matrices} is called \noti{\tTnnd{}} if, for each \tnn{} integer \(n\) with \(n\leq\kappa\), the block \Toe{} matrix \(\Tn\defeq\matauo{C_{j-k}}{j,k=0}{n}\)\index{t@$\Tn$} is \tnnH{}. In the second half of the 1980's, the first two authors intensively studied the structure of \sTnnd{} sequences of complex \tqqa{matrices} in connection with interpretations in the languages of stationary sequences, \Car{} interpolation, orthogonal matrix polynomials etc. (see~\cite{MR885621I_III_V,MR1056068} and also~\cite{MR1152328} for a systematic treatment of several aspects of the theory).

 In particular, it was shown in~\cite[Part~I]{MR885621I_III_V} (see also~\cite[\csec{3.4}]{MR1152328}) that the structure of the elements of a \tTnnd{} sequence of complex \tqqa{matrices} is described in terms of matrix balls which are determined by all preceding elements. Amongst these sequences there is a particular subclass which plays an important role, namely the so-called class of central \sTnnd{} sequences of complex \tqqa{matrices}. These sequences are characterized by the fact that starting with some index all further elements of the sequences coincide with the center of the matrix ball in question. Central \sTnnd{} sequences possess several interesting extremal properties (see~\cite[Parts~I--III]{MR885621I_III_V}) and a remarkable recurrent structure (see~\cite[\cthm{3.4.3}]{MR1152328}).

 In view of the matrix version of a classical theorem due to Herglotz (see, \eg{}~\cite[\cthm{2.2.1}]{MR1152328}), the set of all \sTnnd{} sequences coincides with the set of all sequences of \tFcc{\tqqa{\tnnH} Borel measures} on the unit circle \(\T\defeq\setaa{z\in\C}{\abs{z}=1}\)\index{t@$\T$} of \(\C\). If \(\Cbinf \) is a \sTnnd{} sequence of complex \tqqa{matrices} and if \(\mu\) denotes the unique \tqqa{\tnnH} Borel measure on \(\T\) with \(\Cbinf \) as its sequence of Fourier coefficients then we will call \(\mu\) the spectral measure\index{spectral measure} of \(\Cbinf \). In the special case of a central \sTpd{} sequence of complex \tqqa{matrices}, \ie{}, if for each \tnn{} integer \(n\) the block \Toe{} matrix \(\Tn\defeq\Cmsn\) is positive \tH{}, in~\cite[Part~III]{MR885621I_III_V} (see also~\cite[\csec{3.6}]{MR1152328}), we stated an explicit representation of its spectral measure. In particular, it turned out that in this special case its spectral measure is absolutely continuous with respect to the linear Lebesgue-Borel measure on the unit circle and that the corresponding Radon-Nikodym density can be expressed in terms of left or right orthogonal matrix polynomials.

 The starting point of this paper was the problem to determine the spectral measure of a central \sTnnd{} sequence of complex matrices. An important step on the way to the solution of this problem was gone in the paper~\cite{MR2104258}, where it was proved that the matrix-valued \Car{} function associated with a central \sTnnd{} sequence of complex matrices is rational and, additionally, concrete representations as quotient of two matrix polynomials were derived. Thus, the original problem can be solved if we will be able to find an explicit expression for the \tRHm{} of a rational matrix-valued \Car{} function. This question will be answered in \rthm{M3.2}. As a first essential consequence of this result we determine the \tRHm{}s of central matrix-valued \Car{} functions (see \rthm{C1}). Reformulating \rthm{C1} in terms of \sTnnd{} sequences, we get an explicit description of the spectral measure of central \sTnnd{} sequences of complex matrices (see \rthm{C2}).

 In the final \rsec{S1038}, we apply \rthm{C2} to the theory of multivariate stationary sequences. In particular, we will be able to express explicitly the non-stochastic spectral measure of a multivariate autoregressive stationary sequence by its covariance sequence (see \rthm{C}).

\section{On the \tRHm{} of rational matrix-valued \Car{} functions}\label{S1024}
 In this section, we give an explicit representation of the \tRHm{} of an arbitrary rational matrix-valued \Car{} function.
 
 Let \(\R\)\index{r@$\R$}, \symb{\Z}{z}, \symb{\NO}{n}, and \symb{\N}{n} be the set of all real numbers, the set of all integers, the set of all non-negative integers, and the set of all positive integers, respectively. Throughout this paper, let \(p,q\in\N\)\index{p@$p$}\index{q@$q$}. If \(\mathcal{X}\) is a non-empty set, then by \sym{\mathcal{X}^{q\times p}} we denote the set of all \tqpa{matrices} each entry of which belongs to \(\mathcal{X}.\) The notation \sym{\mathcal{X}^q} is short for \(\mathcal{X}^{q\times 1}\). If \(\mathcal{X}\) is a non-empty set and if \(x_1, x_2,\dotsc, x_q\in\mathcal{X}\), then let\index{c@$\col  (x_j)_{j=1}^q $} 
\[
\col  (x_j)_{j=1}^q 
\defeq  \begin{bmatrix} x_1\\ x_2 \\ \vdots \\ x_q \end{bmatrix}.
\]

 For every choice of \(\alpha, \beta,\in\R\cup \{-\infty,+\infty\}\), let \(\mn{\alpha}{\beta} \defeq  \setaa{m\in\Z}{\alpha \le m\le \beta}\)\index{z@$\mn{\alpha}{\beta}$}. We will use \symb{\Iq}{i} and \symb{\Oqp }{o} for the unit matrix belonging to \(\Cqq \) and the null matrix belonging to \(\Cqp \), respectively. For each \(A\in\Cqq\), let \(\re  A \defeq  \frac{1}{2} (A +A^\ad)\)\index{r@$\re  A$} and \(\im  A \defeq  \frac{1}{2\iu} (A-A^\ad)\)\index{i@$\im  A$} be the real part and the imaginary part of \(A\), respectively. If \(\kappa\in\NOinf \), then a sequence \(\Cbk \) of complex \tqqa{matrices} is called \noti{\tTnnd{}} (resp.\ \noti{\tTpd{}}) if, for each \(n\in\mn{0}{\kappa}\), the block \Toe{} matrix
\begin{align*}
\Tu{n}
\defeq\matauo{C_{j-k}}{j,k=0}{n}
\end{align*}
 \index{t@$\Tu{n}$}is \tnnH{} (resp.\ \tpH{}). Obviously, if \(m\in\NO \), then \(\Cb{m}\) is \tTnnd{} (resp.\ \tTpd{}) if the block \Toe{} matrix \(\Tu{m} =\matauo{C_{j-k}}{j,k=0}{m}\) is \tnnH{} (resp.\ \tpH{}).

 Let \(\Omega\) be a non-empty set and let \(\gA\) be a \(\sigma\)\nobreakdash-algebra on \(\Omega\). A mapping \(\mu\) whose domain is \(\gA\) and whose values belong to the set \symb{\Cggq}{c} of all \tnnH{} complex \tqqa{matrices} is said to be a \notion{\tnnH{} \tqqa{measure} on \((\Omega,\gA)\)}{measure!\tnnH{}} if it is countably additive, \ie{}, if \(\mu( \bigcup_{k=1}^\infty A_k) =\sum_{k=1}^\infty \mu (A_k)\) holds true  for each sequence \((A_k)_{k=1}^\infty\) of pairwise disjoint sets which belong to \(\gA\). The theory of integration with respect to \tnnH{} measures goes back to Kats~\cite{MR0080280} and Rosenberg~\cite{MR0163346}. In particular, we will turn our attention to the set \(\MggqT\) of all \tnnH{} \tqqa{measure}s on  \((\T , \BsaT  )\), where \(\BsaT\)\index{b@$\BsaT$} is the \(\sigma\)\nobreakdash-algebra of all Borel subsets of the unit circle \(\T\defeq\setaa{z\in\C}{\abs{z}=1}\)\index{t@$\T$} of \(\C\).

 Non-negative \tH{} measures belonging to \(\MggqT \) are intimately connected to the class \symb{\CqD}{c} of all \tqqa{\Car{}} functions in the open unit disk \(\D \defeq\setaa{z\in\C}{\abs{z}<1}\)\index{d@$\D$} of \(\C\). A \tqqa{matrix}-valued function \(\Phi \colon\D \to \Cqq \) which is holomorphic in \(\D \) and which fulfills \(\re  \Phi (z)\in\Cggq \) for all \(z\in\D\) is called \notion{\tqqa{\Car{}} function in \(\D \)}{\Car{} function}. The matricial version of a famous theorem due to F.~Riesz and G.~Herglotz illustrates the mentioned interrelation:

\begin{theorem}\label{D2.2.2}
\benui
 \item\label{D2.2.2.a} Let  \(\Phi\in\CqD \). Then there exists one and only one measure \(\mu\in\MggqT\) such that
\begin{equation}\label{NGZ}
\Phi (z) - \iu\im  \Phi (0) = \int_\T  \frac{\zeta + z}{\zeta - z} \mu (\dif\zeta)
\end{equation}
for each \(z\in \D \). For every choice of \(z\) in \(\D \), furthermore,
\[
\Phi (z) - \iu\im  \Phi (0) = \Fcm{0} + 2\sum_{j=1}^\infty \Fcm{j} z^j
\]
 where 
 \begin{equation} \label{D2.2.9}
  \Fcm{j}
  \defeq \int_\T  \zeta^{-j} \mu (\dif\zeta),
 \end{equation}
 \index{c@$\Fcm{j}$}for each \(j\in\Z\) are called the \notion{\tFcc{\(\mu\)}}{\tFc{}}.
 \item\label{D2.2.2.b} Let \(H\) be a \tH{} complex \tqqa{matrix} and let \(\mu\in\MggqT\). Then the function \(\Phi \colon\D \to\Cqq \) defined by
\[
\Phi (z) \defeq   \int_\T  \frac{\zeta + z}{\zeta - z} \mu (\dif\zeta) + \iu H
\]
belongs to \(\CqD \) and fulfills \(\im  \Phi (0) = H\).
\eenui
\end{theorem}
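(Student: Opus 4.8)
The plan is to derive part~\ref{D2.2.2.a} from the classical \emph{scalar} Riesz--Herglotz representation by a polarization argument, and to settle part~\ref{D2.2.2.b} by a direct computation with the Schwarz-type kernel \(\zeta\mapsto\frac{\zeta+z}{\zeta-z}\) combined with differentiation under the integral sign. (An alternative route to existence in part~\ref{D2.2.2.a} would extract \(\mu\) as a weak-\(*\) limit of the absolutely continuous measures carried by \(\re \Phi(r\cdot)\) as \(r\uparrow1\) via a matricial Helly-type selection theorem; the polarization route seems cleaner here since it puts all the analytic weight on the well-known scalar case.)

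For the existence in part~\ref{D2.2.2.a}, I would start from the observation that for each \(v\in\C^q\) the scalar function \(f_v\colon z\mapsto v^\ad\Phi(z)v\) is holomorphic in \(\D\) and satisfies \(\re f_v(z)=v^\ad\bigl(\re \Phi(z)\bigr)v\ge0\), hence is a scalar \Car{} function; the scalar Riesz--Herglotz theorem then produces a unique finite non-negative Borel measure \(\mu_v\) on \(\T\) with \(f_v(z)-\iu\im f_v(0)=\int_\T\frac{\zeta+z}{\zeta-z}\mu_v(\dif\zeta)\) and \(\mu_v(\T)=v^\ad\bigl(\re \Phi(0)\bigr)v\). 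The desired matrix measure is then assembled by polarization: for a fixed \(B\in\BsaT\) the map \(v\mapsto\mu_v(B)\) is a non-negative quadratic form (as the uniqueness of the scalar representation shows, since \(f_{v+w}+f_{v-w}=2f_v+2f_w\) and \(f_{\lambda v}=\abs{\lambda}^2f_v\)), so there is a unique \(\mu(B)\in\Cggq\) with \(v^\ad\mu(B)v=\mu_v(B)\); countable additivity of the \(\mu_v\) carries over to \(\mu\), whence \(\mu\in\MggqT\). Because \(\im \bigl(v^\ad\Phi(0)v\bigr)=v^\ad\bigl(\im \Phi(0)\bigr)v\) and \(v^\ad\bigl(\int_\T\frac{\zeta+z}{\zeta-z}\mu(\dif\zeta)\bigr)v=\int_\T\frac{\zeta+z}{\zeta-z}\mu_v(\dif\zeta)\), the scalar identity says that the two sides of \eqref{NGZ} induce the same quadratic form \(v\mapsto v^\ad(\cdot)v\) and hence are equal. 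Uniqueness follows in the same spirit: if \(\mu,\nu\in\MggqT\) both fulfil \eqref{NGZ}, then for every \(v\) the scalar measures \(B\mapsto v^\ad\mu(B)v\) and \(B\mapsto v^\ad\nu(B)v\) represent the same scalar \Car{} function \(f_v\) and thus coincide, and one more polarization gives \(\mu=\nu\). For the power series formula I would expand, for fixed \(z\in\D\) and uniformly in \(\zeta\in\T\) (using \(\overline{\zeta}=\zeta^{-1}\) on \(\T\) and \(\abs{z}<1\)),
\[
\frac{\zeta+z}{\zeta-z}=\frac{1+z\overline{\zeta}}{1-z\overline{\zeta}}=1+2\sum_{j=1}^\infty(z\overline{\zeta})^j=1+2\sum_{j=1}^\infty\zeta^{-j}z^j,
\]
and integrate termwise against the finite measure \(\mu\); with the notation~\eqref{D2.2.9} this yields \(\Phi(z)-\iu\im \Phi(0)=\Fcm{0}+2\sum_{j=1}^\infty\Fcm{j}z^j\).

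For part~\ref{D2.2.2.b}, the decisive elementary fact is that \(\re (\alpha M)=(\re \alpha)M\) whenever \(M\in\Cqq\) is Hermitian and \(\alpha\in\C\). Applying this pointwise under the integral and using \(\re \frac{\zeta+z}{\zeta-z}=\frac{1-\abs{z}^2}{\abs{\zeta-z}^2}>0\) for \(\abs{\zeta}=1\) and \(\abs{z}<1\), one gets \(\re \Phi(z)=\int_\T\frac{1-\abs{z}^2}{\abs{\zeta-z}^2}\mu(\dif\zeta)\in\Cggq\) once the interchange of \(\re \) and the integral has been justified by linearity. Holomorphy of \(\Phi\) follows by differentiating under the integral sign---the kernel and all its \(z\)-derivatives being bounded on compact subsets of \(\D\) while \(\mu\) is finite---or alternatively via Morera's theorem and Fubini. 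Finally \(\Phi(0)=\int_\T\mu(\dif\zeta)+\iu H=\mu(\T)+\iu H\) with \(\mu(\T)\) Hermitian, so \(\im \Phi(0)=H\).

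I expect the main obstacle to be not conceptual but a matter of discipline with the integration theory for \tnnH{} \tqqa{measures}: one has to verify that the set function built by polarization really is a \tnnH{} measure, that termwise integration of the uniformly convergent kernel expansion is legitimate, and that \(\re \) commutes with the integral and one may differentiate under it. This is exactly the place where the machinery going back to Kats and Rosenberg is needed; everything else reduces to the scalar Riesz--Herglotz theorem together with elementary linear algebra.
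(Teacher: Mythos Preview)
The paper does not prove this theorem in the body of the text; immediately after the statement it simply records that a proof can be found in~\cite[\cthm{2.2.2}, pp.~71/72]{MR1152328}. So there is no ``paper's own proof'' to compare against beyond that citation.

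Your sketch is correct and is one of the standard routes to the matricial Riesz--Herglotz theorem. The polarization argument for part~\ref{D2.2.2.a} is sound: the parallelogram identity for the scalar functions \(f_v\) transfers, by uniqueness of the scalar representing measure, to the measures \(\mu_v\), so \(v\mapsto\mu_v(B)\) is a non-negative quadratic form on \(\C^q\) and hence comes from a unique matrix \(\mu(B)\in\Cggq\); countable additivity is inherited from the scalar measures; and over \(\C\) a matrix is determined by its quadratic form \(v\mapsto v^\ad(\cdot)v\), which justifies the passage from the scalar identities back to \eqref{NGZ}. The kernel expansion and termwise integration for the Fourier-coefficient formula, and the computations for part~\ref{D2.2.2.b} (including \(\re(\alpha M)=(\re\alpha)M\) for Hermitian \(M\), Poisson-kernel positivity, and holomorphy via differentiation under the integral or Morera), are all accurate. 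The caveats you list---checking that the polarized set function is genuinely a \tnnH{} measure and that the integral manipulations are licit in the Kats--Rosenberg framework---are precisely the bookkeeping points that need to be written out, but they present no real obstacle.
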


 A proof of \rthm{D2.2.2} is given, \eg{}, in~\cite[\cthm{2.2.2}, pp.~71/72]{MR1152328}. If \(\Phi \in\CqD \), then the unique measure \(\mu\in\MggqT\) which fulfills \eqref{NGZ} for each \(z\in\D \) is said to be the \notion{\tRHm{} of \(\Phi\)}{\tRHm{}}.

 Let \symb{\kron{u}}{d} be the Dirac measure on \((\T,\BsaT  )\) with unit mass at \(u\in\T\).
 
\begin{exa}\label{E1223}
 Let \(u\in\T\) and \(W\in\Cggq\). Then \rthm{D2.2.2} yields that the function \(\Phi\colon\D\to\Cqq\) defined by \(\Phi(z)\defeq\frac{u+z}{u-z}W\) belongs to \(\CqD\) with \tRHm{} \(\mu\defeq\kron{u}W\). The \tFcc{\(\mu\)} are given by \(\Fcm{j}=u^{-j}W\) for all \(j\in\Z\) and the function \(\Phi\) admits the representation \(\Phi(z)=[1+2\sum_{j=1}^\infty(zu)^j]W\) for all \(z\in\D\).
\end{exa}

Let \symb{\ran{A}}{r} and \symb{\nul{A}}{n} be the column space and the null space of a \tpqa{complex} matrix \(A\), respectively.

\bleml{L1103}
 Let \(\Phi\in\CqD\) with \tRHm{} \(\mu\). For all \(z\in\D\),
 \begin{align*}
  \Ran{\Phi(z)-\iu\im\Phi(0)}&=\Ran{\mu(\T)}=\Ran{\re\Phi(z)}
  \sand{}
  \Nul{\Phi(z)-\iu\im\Phi(0)}&=\Nul{\mu(\T)}=\Nul{\re\Phi(z)}.
 \end{align*}
\elem
\bproof
 Let \(z\in\D\). Since \(\re(\Phi(z)-\iu\im\Phi(0))=\re\Phi(z)\in\Cggq\), we obtain from~\zitaa{MR3014198}{\clem{A.8}, \cpartss{(a)}{(b)}} then \(\ran{\re\Phi(z)}\subseteq\ran{\Phi(z)-\iu\im\Phi(0)}\) and \(\nul{\Phi(z)-\iu\im\Phi(0)}\subseteq\nul{\re\Phi(z)}\). In view of \eqref{NGZ}, the application of~\zitaa{MR2988005}{\clem{B.2(b)}} yields \(\ran{\Phi(z)-\iu\im\Phi(0)}\subseteq\ran{\mu(\T)}\) and \(\nul{\mu(\T)}\subseteq\nul{\Phi(z)-\iu\im\Phi(0)}\). From \eqref{NGZ}, we get \(\re\Phi(z)=\int_\T(1-\abs{z}^2)/\abs{\zeta-z}^2\mu(\dif\zeta)\). Since \((1-\abs{z}^2)/\abs{\zeta-z}^2>0\) for all \(\zeta\in\T\), the application of~\zitaa{MR2988005}{\clem{B.2(b)}} yields \(\ran{\re\Phi(z)}=\ran{\mu(\T)}\) and \(\nul{\re\Phi(z)}=\nul{\mu(\T)}\), which completes the proof.
\eproof

 Now we consider the \tRHm{}s for a particular subclass of \(\CqD\). In particular, we will see that in this case, the \tRHm{} is absolutely continuous with respect to the \noti{linear Lebesgue measure} \(\lebc \)\index{l@$\lebc $} defined on \(\BsaT\) and that the Radon-Nikodym density can be always chosen as a continuous function on \(\T\).

 By a region\index{region} of \(\C\) we mean an open, connected, non-empty subset of \(\C\). For all \(z\in\C\) and all \(r\in(0,+\infty)\), let \(\diskaa{z}{r}\defeq\setaa{w\in\C}{\abs{w-z}<r}\)\index{k@$\diskaa{z}{r}$}.

\begin{lemma}\label{M1.2}
 Let \(\cD\) be a region of \(\C\) such that \(\cdisk{r}\subseteq\cD\) for some \(r\in(1,+\infty)\) and let \(F \colon\cD\to \Cqq \) be holomorphic in \(\cD\) such that the restriction \(\Phi\) of \(F\) onto \(\D\) belongs to \(\CqD \). Then the \tRHm{} \(\mu\) of \(\Phi\) admits the representation
\[%
 \mu (B)
 = \frac{1}{2\pi} \int_B \re F(\zeta) \lebca{\dif\zeta},
\]
 for each \(B\in\BsaT  \).
\end{lemma}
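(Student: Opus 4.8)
The plan is to show that the measure on \((\T,\BsaT)\) defined by
\[
 \nu(B)\defeq\frac{1}{2\pi}\int_B\re F(\zeta)\,\lebca{\dif\zeta},\qquad B\in\BsaT,
\]
belongs to \(\MggqT\) and satisfies \(\Phi(z)-\iu\im\Phi(0)=\int_\T\frac{\zeta+z}{\zeta-z}\,\nu(\dif\zeta)\) for every \(z\in\D\); by the uniqueness statement in \rthm{D2.2.2} this identifies \(\nu\) with the \tRHm{} \(\mu\) of \(\Phi\). For the first point I would observe that the inclusion \(\cdisk{r}\subseteq\cD\) with \(r>1\) and the holomorphy of \(F\) on \(\cD\) imply that \(F\), hence also \(\re F=\frac{1}{2}(F+F^\ad)\), is continuous on the compact set \(\T\); in particular \(\re F\) is bounded there, so \(\nu(B)\) is finite and \(B\mapsto\nu(B)\) is countably additive. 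Since moreover \(\Cggq\) is closed in \(\Cqq\) and \(\re F(z)=\re\Phi(z)\in\Cggq\) for all \(z\in\D\), continuity of \(\re F\) on the closed unit disk forces \(\re F(\zeta)\in\Cggq\) for every \(\zeta\in\T\), whence \(\nu\in\MggqT\).

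To establish the Riesz--Herglotz identity I would expand \(F\) into its Taylor series \(F(w)=\sum_{j=0}^\infty A_jw^j\) about the origin, with \(A_j\in\Cqq\). Because \(F\) is holomorphic on \(\diskaa{0}{r}\) with \(r>1\), the radius of convergence exceeds \(1\), so the series converges uniformly on the closed unit disk; consequently the induced expansion \(\re F(\zeta)=\frac{1}{2}\sum_{j=0}^\infty\bigl(A_j\zeta^j+A_j^\ad\zeta^{-j}\bigr)\) converges uniformly for \(\zeta\in\T\) and may be integrated termwise against the bounded kernel \(\zeta\mapsto\frac{\zeta+z}{\zeta-z}\).

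The computation then rests on the elementary fact that, for each \(z\in\D\) and \(j\in\Z\),
\[
 \frac{1}{2\pi}\int_\T\frac{\zeta+z}{\zeta-z}\,\zeta^j\,\lebca{\dif\zeta}
 =\begin{cases}2z^j,&j\ge1,\\ 1,&j=0,\\ 0,&j\le-1,\end{cases}
\]
obtained from the uniformly convergent expansion \(\frac{\zeta+z}{\zeta-z}=1+2\sum_{m=1}^\infty z^m\zeta^{-m}\) (valid on \(\T\) since \(\abs{z}<1\)) together with \(\frac{1}{2\pi}\int_\T\zeta^k\,\lebca{\dif\zeta}=1\) if \(k=0\) and \(=0\) if \(k\in\Z\setminus\{0\}\). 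Integrating the series for \(\re F\) termwise yields \(\int_\T\frac{\zeta+z}{\zeta-z}\,\nu(\dif\zeta)=\re A_0+\sum_{j=1}^\infty A_jz^j\); on the other hand, since \(\Phi\) is the restriction of \(F\) to \(\D\), one has \(\Phi(z)-\iu\im\Phi(0)=F(z)-\iu\im F(0)=\re A_0+\sum_{j=1}^\infty A_jz^j\) as well. The two sides coincide, so \(\mu=\nu\).

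The argument is little more than bookkeeping; the only steps requiring care are the two uses of the hypothesis \(r>1\) — first to guarantee that \(\re F\) is continuous (hence bounded, integrable and \(\Cggq\)-valued) on \(\T\), so that \(\nu\) is a bona fide element of \(\MggqT\), and second to guarantee uniform convergence of the Taylor series on the closed unit disk, which is what legitimizes the termwise integration. Alternatively one could compute the Fourier coefficients \(\Fcm{j}=\int_\T\zeta^{-j}\mu(\dif\zeta)\) of \(\mu\) directly from the power-series representation in \rthm{D2.2.2} and compare them with the Fourier coefficients of \(\nu\) evaluated by the same orthogonality relations, then invoke that an element of \(\MggqT\) is determined by its Fourier coefficient sequence (which itself follows from \rthm{D2.2.2}); this leads to the identical calculation.
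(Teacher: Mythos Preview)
Your proof is correct and is essentially the approach the paper has in mind: the paper does not give a detailed argument but merely states that a proof follows from a matrix version of the Schwarz integral formula (citing~\cite[p.~71]{MR1152328}), and what you have written is precisely a self-contained derivation of that formula via the Taylor expansion of \(F\) together with the orthogonality relations \(\frac{1}{2\pi}\int_\T\zeta^k\,\lebca{\dif\zeta}=\delta_{k,0}\). The only cosmetic difference is that the paper quotes the Schwarz formula as a black box, whereas you reprove it inline.
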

 A proof of \rlem{M1.2} can be given by use of a matrix version of an integral formula due to H.~A.~Schwarz (see, \eg{}~\cite[p.~71]{MR1152328}).

 In particular, \rlem{M1.2} contains full information on the \tRHm{s} of that functions belonging to \(\CqD\) which are restrictions onto \(\D\) of rational matrix-valued functions without poles on \(\T\). Our next goal is to determine the \tRHm{} of functions belonging to \(\CqD\) which are restrictions onto \(\D\) of rational matrix-valued functions having poles on \(\T\). First we are going to verify that in this case all poles on \(\T\) have order one. Our strategy of proving this is based on the following fact:
\begin{lemma}\label{M1.1}
 Let \(\Phi \in\CqD \) with \tRHm{} \(\mu\). For each \(u\in\T \), then
\begin{equation}\label{MB1}
 \mu\rk*{\{u\}}
 =\lim_{r\to 1 - 0} \frac{1-r}{2} \Phi (ru).
\end{equation}
\end{lemma}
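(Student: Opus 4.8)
The plan is to substitute the Riesz--Herglotz representation \eqref{NGZ} into the right-hand side of \eqref{MB1} and then to pass to the limit under the integral sign. Fix \(u\in\T\). For every \(r\in(0,1)\), formula \eqref{NGZ} gives
\[
 \frac{1-r}{2}\Phi(ru)=\frac{1-r}{2}\iu\im\Phi(0)+\int_\T k_r(\zeta)\,\mu(\dif\zeta),
 \qquad
 k_r(\zeta)\defeq\frac{1-r}{2}\cdot\frac{\zeta+ru}{\zeta-ru}.
\]
Since \(\frac{1-r}{2}\iu\im\Phi(0)\) vanishes as \(r\to1-0\), it suffices to prove that \(\int_\T k_r(\zeta)\,\mu(\dif\zeta)\to\mu(\{u\})\).

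Next I would examine the scalar kernel \(k_r\) on \(\T\). Writing \(\zeta=ue^{\iu\theta}\) with \(\theta\in(-\pi,\pi]\), one has \(\frac{\zeta+ru}{\zeta-ru}=\frac{e^{\iu\theta}+r}{e^{\iu\theta}-r}\); from \(\abs{e^{\iu\theta}-r}^2=(1-r)^2+2r(1-\cos\theta)\ge(1-r)^2\) and \(\abs{e^{\iu\theta}+r}\le1+r\) one reads off the uniform bound \(\abs{k_r(\zeta)}\le\frac{1+r}{2}\le1\), valid for all \(\zeta\in\T\) and all \(r\in(0,1)\). Moreover \(k_r(u)=\frac{1+r}{2}\to1\), while for \(\zeta\in\T\setminus\{u\}\) the quotient \(\frac{\zeta+ru}{\zeta-ru}\) stays bounded as \(r\to1-0\) and the prefactor \(\frac{1-r}{2}\) tends to \(0\), so that \(k_r(\zeta)\to0\). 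Hence \(k_r\) converges pointwise on \(\T\) to \(\mathbf{1}_{\{u\}}\) and is dominated by the constant \(1\).

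Finally, a version of Lebesgue's dominated convergence theorem for integrals with respect to \tnnH{} measures — obtained in the usual way by passing to the finite scalar measure \(\tau\defeq\operatorname{tr}\mu\) and its bounded Radon--Nikodym density \(\frac{\dif\mu}{\dif\tau}\), and then applying the scalar theorem entrywise — yields \(\int_\T k_r(\zeta)\,\mu(\dif\zeta)\to\int_\T\mathbf{1}_{\{u\}}(\zeta)\,\mu(\dif\zeta)=\mu(\{u\})\); together with the vanishing of the first summand this gives \eqref{MB1}. I expect no serious obstacle: the only step requiring care is the interchange of limit and integral, and the explicit estimate \(\abs{k_r}\le1\) together with the pointwise limit makes this routine. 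Note that neither regularity of \(\Phi\) beyond \(\Phi\in\CqD\) nor any information about poles of \(\Phi\) on \(\T\) enters the argument; specializing to \(\Phi(z)=\frac{u+z}{u-z}W\) with \(W\in\Cggq\) recovers the atom \(\mu(\{u\})=W\) computed in Example~\ref{E1223}.
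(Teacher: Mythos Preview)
Your argument is correct: the uniform bound \(\abs{k_r}\le1\), the pointwise limit \(k_r\to\mathbf{1}_{\{u\}}\), and the reduction to scalar dominated convergence via the trace measure are all sound, and together they yield \eqref{MB1}. The paper does not supply its own proof of this lemma but merely cites~\cite[\clem{8.1}]{MR1004239}, so there is no in-paper argument to compare against; your approach is the standard one and nothing is missing.
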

 A proof of \rlem{M1.1} is given, \eg{}, in~\cite[\clem{8.1}]{MR1004239}. As a direct consequence of \rlem{M1.1} we obtain:
\breml{R1224}
 Let \(\cD\) be a region of \(\C\) such that \(\cdisk{r}\subseteq\cD\) for some \(r\in(1,+\infty)\) and let \(F\colon\cD\to\Cqq\) be holomorphic such that the restriction \(\Phi\) of \(F\) onto \(\D\) belongs to \(\CqD\). Then the \tRHm{} \(\mu\) of \(\Phi\) fulfills \(\mu(\set{u})=\Oqq\) for all \(u\in\T\).
\erem

\bpropl{P1309}
 Let \(\cD\) be a region of \(\C\) such that \(\cdisk{r}\subseteq\cD\) for some \(r\in(1,+\infty)\) and let \(F\) be a \tqqa{matrix-valued} function meromorphic in \(\cD\) such that the restriction \(\Phi\) of \(F\) onto \(\D\) belongs to \(\CqD\). Furthermore, let \(u\in\T\) be a pole of \(F\). Then \(u\) is a simple pole of \(F\) with \(\Res(F,u)=-2u\mu(\set{u})\) and
 \begin{equation}\label{P1309.B1}
  \lim_{r\to 1 - 0} \ek*{ (ru - u)F(ru)}
  =-2u\mu(\set{u}),
 \end{equation}
 where \symb{\Res(F,u)} is the residue of \(F\) at \(u\) and \(\mu\) is the \tRHm{} of \(\Phi\).
\eprop
\bproof
 Because of \rlem{M1.1}, we have \eqref{MB1}, which implies \eqref{P1309.B1}. Denote by \(k\) the order of the pole \(u\) of \(F\). Then \(k\in\N\) and
 \begin{equation}\label{P1309.1}
  \lim_{z\to u}(z-u)^kF(z)
  =A
  \neq\Oqq.
 \end{equation}
 In the case \(k>1\), we infer from \eqref{P1309.B1} that
 \[%
  \lim_{r\to 1 - 0} \ek*{ (ru - u)^kF(ru)}
  =\ek*{\lim_{r\to 1 - 0}(ru - u)^{k-1}}\ek*{\lim_{r\to 1 - 0} \ek*{ (ru - u)F(ru)}}
  =\Oqq,
 \]%
 which contradicts \eqref{P1309.1}. Thus \(k=1\) and the application of \eqref{P1309.B1} completes the proof.
\eproof

Since every complex-valued function \(f\) meromorphic in a region \(\cD\) of \(\C\) can be written as \(f=g/h\) with holomorphic functions \(g,h\colon\cD\to\C\), where \(h\) does not vanish identically in \(\cD\) (see, \eg{},~\zitaa{MR555733}{\cthm{11.46}}), we obtain:
\breml{R1336}
 For every \tpqa{matrix-valued} function \(F\) meromorphic in a region \(\cD\) of \(\C\), there exist a holomorphic matrix-valued function \(G\colon\cD\to\Cpq\) and a holomorphic function \(h\colon\cD\to\C\) which does not vanish identically in \(\cD\), such that \(F=h^\inv G\).
\erem

 If \(f\) is holomorphic at a point \(z_0\in\C\), then, for each \(m\in\NO \), we write \(f^{(m)} (z_0)\)\index{$f^{(m)}(z_0)$} for the \(m\)th derivative of \(f\) at \(z_0\).
\begin{lemma}\label{L1344}
 Let \(F\) be a \tpqa{matrix-valued} function meromorphic in a region \(\cD\) of \(\C\). In view of \rrem{R1336}, let \(G\colon\cD\to\Cpq\) and \(h\colon\cD\to\C\) be holomorphic such that \(h\) does not vanish identically in \(\cD\) and that \(F=h^\inv G\) holds true. Suppose that \(w\in\cD\) is a zero of \(h\) with multiplicity \(m>0\). Then \(w\) is a pole (including a removable singularity) of \(F\), the order \(k\) of the pole \(w\) fulfills \(0\leq k\leq m\), and \(h^{(m)} (w) \ne 0\) holds true. For all \(\ell\in\Z_{k, m}\), furthermore,
\begin{equation}\label{L1344.B}
 \lim_{z\to w} \ek*{  (z-w)^\ell F (z)}
 = \frac{m!}{(m-\ell)! h^{(m)} (w)} G^{(m-\ell)} (w).
\end{equation}
\end{lemma}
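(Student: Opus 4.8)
The plan is to localise the problem at $w$ and factor out the zero of $h$. Since $w$ is a zero of $h$ of multiplicity $m$, by definition $h^{(j)}(w)=0$ for $j\in\mn{0}{m-1}$ and $h^{(m)}(w)\neq0$, which in particular yields the assertion $h^{(m)}(w)\neq0$. Defining $\tilde h\colon\cD\to\C$ by $\tilde h(z)\defeq(z-w)^{-m}h(z)$ for $z\in\cD\setminus\set{w}$ and $\tilde h(w)\defeq h^{(m)}(w)/m!$ then produces a function holomorphic in $\cD$ with $h(z)=(z-w)^m\tilde h(z)$ for all $z\in\cD$ and with $\tilde h(w)\neq0$. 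Since $\tilde h$ is continuous and $\tilde h(w)\neq0$, we may pick an open disc $U\subseteq\cD$ with centre $w$ on which $\tilde h$ has no zero. Then $h$ vanishes in $U$ only at $w$, the function $P\defeq\tilde h^\inv G$ is holomorphic in $U$ with values in $\Cpq$, and for $z\in U\setminus\set{w}$ we have $h(z)\neq0$ and hence $F(z)=h(z)^\inv G(z)=(z-w)^{-m}\tilde h(z)^\inv G(z)=(z-w)^{-m}P(z)$.

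As $(z-w)^m F(z)=P(z)$ extends holomorphically across $w$, the point $w$ is a pole of $F$ (possibly a removable singularity) of some order $k$ with $0\le k\le m$. By the definition of the order $k$ of the pole, the function $z\mapsto(z-w)^k F(z)$ extends holomorphically across $w$; hence, for every $\ell\in\mn{k}{m}$, so does $z\mapsto(z-w)^\ell F(z)=(z-w)^{\ell-k}\ek*{(z-w)^k F(z)}$, and in particular the limit $L\defeq\lim_{z\to w}\ek*{(z-w)^\ell F(z)}$ exists. Write $\Psi$ for that holomorphic extension and put $s\defeq m-\ell\in\mn{0}{m-k}$. Multiplying the identity $(z-w)^\ell F(z)=(z-w)^{-s}P(z)$, which holds on $U\setminus\set{w}$, by $(z-w)^s$ shows $P(z)=(z-w)^s\Psi(z)$ on $U\setminus\set{w}$, hence on all of $U$ by continuity, with $\Psi(w)=L$. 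Comparing Taylor coefficients at $w$ on both sides now gives that $P^{(j)}(w)$ vanishes for $j\in\mn{0}{s-1}$ and that $P^{(s)}(w)=s!\,L$. Since $G=\tilde h P$ in $U$, the Leibniz rule together with these vanishing relations collapses to $G^{(s)}(w)=\tilde h(w)\,P^{(s)}(w)=\frac{s!}{m!}\,h^{(m)}(w)\,L$; solving for $L$ and recalling $s=m-\ell$ and $L=\lim_{z\to w}\ek*{(z-w)^\ell F(z)}$ yields precisely \eqref{L1344.B}.

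I do not expect a genuine obstacle; the argument amounts to careful bookkeeping of orders of vanishing. The one point that deserves attention is the restriction $\ell\in\mn{k}{m}$: it is precisely the inequality $\ell\ge k$ that makes $z\mapsto(z-w)^\ell F(z)$ holomorphic near $w$ and that forces $P$ to vanish at $w$ to order at least $s=m-\ell$, which is what reduces the Leibniz expansion of $G^{(s)}(w)$ to its single surviving term $\tilde h(w)P^{(s)}(w)$. No separate discussion of the degenerate case in which $G$ vanishes identically near $w$ (equivalently, $F$ extends holomorphically by zero across $w$, so $k=0$) is needed, since then $L=0$, both sides of \eqref{L1344.B} vanish, and every step above remains valid.
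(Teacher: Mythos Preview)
Your proof is correct and follows essentially the same strategy as the paper's: localise to a disc around $w$, factor out the power of $(z-w)$, and use the Leibniz rule together with the vanishing of lower-order derivatives so that the sum collapses to a single term. The only cosmetic difference is where the vanishing is carried: the paper writes $h(z)=(z-w)^\ell\eta_\ell(z)$ and exploits $\eta_\ell^{(s)}(w)=0$ for $s<m-\ell$ in the Leibniz expansion of $G=\eta_\ell\Phi_\ell$, whereas you factor $h$ completely as $(z-w)^m\tilde h$ with $\tilde h(w)\ne0$, put the remaining zero into $P=\tilde h^{-1}G=(z-w)^{m-\ell}\Psi$, and collapse the Leibniz expansion of $G=\tilde h P$ via $P^{(j)}(w)=0$ for $j<m-\ell$; your version is arguably tidier since $\tilde h$ is defined once and does not depend on $\ell$.
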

\begin{proof}
 Obviously \(w\) is a pole (or a removable singularity) of \(F\) and \(k\) fulfills \(0\leq k\leq m\). Since \(h\) is holomorphic, there is an \(r\in(0,+\infty)\) such that \(K\defeq\diskaa{w}{r}\) is a subset of \(\cD\) and \(h (z) \ne 0\) for all \(z\in K\setminus\set{w}\). Then \(F\) is holomorphic in \(K \setminus \{w\}\). Let \(\ell\in\Z_{k,m}\). Then there is a holomorphic function \(\Phi_\ell\colon K \to\Cpq \) such that \(F (z) = (z-w)^{-\ell}\Phi_\ell(z)\) for all \(z\in K \setminus \{w\}\). Consequently,
\begin{equation}\label{L1344.1}
 \lim_{z\to w} \ek*{  (z-w)^\ell F (z)}
 =\Phi_\ell(w).
\end{equation}
Since \(w\) is a zero of \(h\) with multiplicity \(m\ge \ell\), there exists a holomorphic function \(\eta_\ell\colon\cD\to\C\) such that \(h (z) = (z-w)^\ell \eta_\ell(z)\) holds true for all \(z\in\cD\). Furthermore, we have
\[
 h(z)
 =\sum_{j=m}^\infty\frac{h^{(j)} (w)}{j!} (z-w)^j
\]
for all \(z\in K\), where \(h^{(m)} (w) \ne 0\). Thus, for all \(z\in K\), we conclude
\[
 \eta_\ell(z)
 = \sum_{j=m}^\infty \frac{h^{(j)} (w)}{j!}  (z - w)^{j-\ell}.
\]
Comparing the last equation with the Taylor series representation of \(\eta_\ell\) centered at \(w\), we obtain \(\eta_\ell^{(s)} (w) = 0\) for all \(s\in\Z_{0, m-\ell-1}\) and
\[
 \frac{\eta_\ell^{(m-\ell) }(w)}{(m-\ell)!}
 = \frac{h^{(m)} (w)}{m!}.
\]
Using the general Leibniz rule for differentiation of products, we get then
\[
 (\eta_\ell\Phi_\ell)^{(m-\ell)} (w)
 = \sum_{s=0}^{m-\ell}\binom{m - \ell}{s}\ek*{\eta_\ell^{(s)} (w)}\ek*{\Phi_\ell^{(m-\ell-s)} (w)}
 = \frac{(m-\ell)!h^{(m)} (w)}{m!}\Phi_\ell(w),
\]
which, in view of \(h^{(m)} (w)\ne 0\), implies
\begin{equation}\label{L1344.2}
 \Phi_\ell(w)
 = \frac{m!}{(m-\ell)!h^{(m)} (w)}  (\eta_\ell\Phi_\ell)^{(m-\ell)} (w). 
\end{equation}
Obviously, we have
\[
 \eta_\ell(z)\Phi_\ell(z)
 =\eta_\ell(z) \ek*{ (z-w)^\ell F (z)}
 =h (z)F (z)
 =G(z)
\]
for all \(z\in K  \setminus \{w\}\). Since \(G\) is holomorphic, by continuity, this implies \((\eta_\ell\Phi_\ell) (z) = G (z)\)  for all \(z\in K \) and, hence \((\eta_\ell\Phi_\ell)^{(m-\ell)} (w) = G^{(m-\ell)} (w).\) Thus, from \eqref{L1344.1} and \eqref{L1344.2} we finally obtain \eqref{L1344.B}.
\end{proof}

\begin{lemma}\label{L1445}
 Let \(\cD\) be a region of \(\C\) such that \(\cdisk{r}\subseteq\cD\) for some \(r\in(1,+\infty)\) and let \(F\) be a \tqqa{matrix-valued} function meromorphic in \(\cD\) such that the restriction \(\Phi\) of \(F\) onto \(\D\) belongs to \(\CqD\). In view of \rrem{R1336}, let \(G\colon\cD\to\Cqq\) and \(h\colon\cD\to\C\) be holomorphic such that \(h\) does not vanish identically in \(\cD\) and that \(F=h^\inv G\) holds true. Let \(u\in\T\) be a zero of \(h\) with multiplicity \(m>0\). Then:
 \begin{enui}
  \item\label{L1445.a} \(u\) is either a removable singularity or a simple pole of \(F\).
  \item\label{L1445.b} \(h^{(m)}(u)\neq0\) and
 \begin{equation}\label{L1445.B1}
  \mu\rk*{\set{u}}
  =\frac{-m}{2uh^{(m)} (u)}G^{(m - 1)} (u),
 \end{equation}
 where \(\mu\) is the \tRHm{} of \(\Phi\).
  \item\label{L1445.c} If there is no \(z\in\cD\) with \(G(z)=\Oqq\) and \(h(z)=0\), then \(u\) is a pole of \(F\).
  \item\label{L1445.d} \(u\) is a removable singularity of \(F\) if and only if \(G^{(m - 1)} (u)=\Oqq\) or equaivalently \(\mu(\set{u})=\Oqq\).
 \end{enui}
\end{lemma}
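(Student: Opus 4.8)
The plan is to run everything through the three earlier facts \rlem{L1344}, \rprop{P1309}, and \rlem{M1.1}, keeping in mind that $F=h^\inv G$ is holomorphic off the isolated zeros of $h$, so that $u$ is an isolated singularity of $F$. First I would apply \rlem{L1344} with $w=u$: it yields at once that $u$ is a pole of $F$ (possibly a removable singularity) of some order $k$ with $0\le k\le m$, that $h^{(m)}(u)\ne0$ (the first half of~(b)), and the limit identity $\lim_{z\to u}\ek*{(z-u)^\ell F(z)}=\frac{m!}{(m-\ell)!\,h^{(m)}(u)}G^{(m-\ell)}(u)$ for every $\ell$ with $k\le\ell\le m$. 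For~(a) it then remains to observe that if $k\ge1$, then $u$ is a genuine pole of $F$ on $\T$, so \rprop{P1309} forces $k=1$; hence $u$ is either a removable singularity ($k=0$) or a simple pole ($k=1$) of $F$.

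For the formula \eqref{L1445.B1} I would first establish that
\[
 \lim_{r\to1-0}\ek*{(ru-u)F(ru)}=-2u\mu(\set{u})
\]
holds in both cases $k=1$ and $k=0$. For $k=1$ this is precisely \eqref{P1309.B1} of \rprop{P1309}. For $k=0$ the point $u$ is a removable singularity of $F$, hence $F$ is bounded on some punctured disk around $u$, so the left-hand side is $\Oqq$, while $\Phi(ru)=F(ru)$ stays bounded and therefore \rlem{M1.1} gives $\mu(\set{u})=\lim_{r\to1-0}\frac{1-r}{2}\Phi(ru)=\Oqq$; both sides vanish. Since $0\le k\le1\le m$, the case $\ell=1$ of the \rlem{L1344} identity applies and gives $\lim_{z\to u}\ek*{(z-u)F(z)}=\frac{m}{h^{(m)}(u)}G^{(m-1)}(u)$. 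Restricting this (existing) limit to the radial path $z=ru$, $r\to1-0$, and equating with the display yields $-2u\mu(\set{u})=\frac{m}{h^{(m)}(u)}G^{(m-1)}(u)$, so dividing by $-2u\ne0$ gives \eqref{L1445.B1}, finishing~(b).

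Part~(c) I would get by contraposition: if $u$ is not a pole of $F$, then by~(a) it is a removable singularity, so $F$ extends to a holomorphic $\tilde F$ on an open disk $K$ around $u$; the identity $G(z)=h(z)F(z)=h(z)\tilde F(z)$ on $K\setminus\set{u}$ extends by continuity of both sides to all of $K$, whence $G(u)=h(u)\tilde F(u)=\Oqq$ since $h(u)=0$ — a common zero of $G$ and $h$ in $\cD$, contradicting the hypothesis. For part~(d): since $m\ne0$, $u\ne0$ and $h^{(m)}(u)\ne0$, the scalar in \eqref{L1445.B1} is nonzero, so $\mu(\set{u})=\Oqq$ if and only if $G^{(m-1)}(u)=\Oqq$; and $u$ is a removable singularity of $F$ if and only if $G^{(m-1)}(u)=\Oqq$, because $\lim_{z\to u}\ek*{(z-u)F(z)}$ is $\Oqq$ when the singularity is removable ($F$ bounded near $u$) but a nonzero matrix when $u$ is a simple pole, and by the $\ell=1$ case of \rlem{L1344} this limit equals $\frac{m}{h^{(m)}(u)}G^{(m-1)}(u)$.

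There is no serious obstacle once \rlem{L1344} is available — it does the real analytic work and immediately hands over $h^{(m)}(u)\ne0$, the bound $k\le m$, and the residue-type limit. The only points that need a little care are the uniform treatment of the cases $k=0$ and $k=1$ in~(b) (replacing \eqref{P1309.B1}, which \rprop{P1309} provides only for genuine poles, by the trivially true identity in the removable case) and the routine bookkeeping that a full limit as $z\to u$ is inherited by the radial limit $z=ru$ and that a function bounded near a removable singularity makes the weighted limits $\ek*{(z-u)F(z)}$ and $\frac{1-r}{2}\Phi(ru)$ vanish.
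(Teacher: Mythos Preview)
Your proposal is correct and follows essentially the same route as the paper: invoke \rlem{L1344} to bound the order of the singularity and to get the $\ell=1$ limit formula, invoke \rprop{P1309} to force $k\le1$, and then match the two limits to obtain \eqref{L1445.B1}; parts~(c) and~(d) are then straightforward consequences. Your treatment is in fact slightly more careful than the paper's in one spot: the paper simply cites \rprop{P1309} for \eqref{P1309.B1}, which as stated assumes $u$ is a genuine pole, whereas you explicitly handle the removable case $k=0$ by observing that both sides of the identity vanish (via boundedness of $F$ near $u$ and \rlem{M1.1}).
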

\begin{proof}
 Obviously \(h^{(m)}(u)\neq0\) and \(u\) is either a removable singularity or a pole of \(F \), which then is simple according to \rprop{P1309}, \ie{}, the order of the pole \(u\) of \(F\) is either \(0\) or \(1\). Thus, we can chose \(\ell=1\) in \rlem{L1344} and obtain
\begin{equation}\label{L1445.1}
 \lim_{r\to 1-0}\ek*{(ru - u) F(r u)}
 = \frac{m}{h^{(m)}  (u)}G^{(m-1)} (u).
\end{equation}
 \rprop{P1309} yields \eqref{P1309.B1}. Comparing \eqref{P1309.B1} and \eqref{L1445.1}, we get \eqref{L1445.B1}. The rest is plain.
\end{proof}

 Now we will extend the statement of \rlem{L1445} for the case of rational matrix-valued functions. For this reason we will first need some notation.

 For each \(A\in\Cqq \), let \(\det A\)\index{d@$\det A$} be  the determinant of \(A\) and let \(A^\adj\)\index{$A^\adj$} be the classical adjoint of \(A\) or classical adjugate (see, \eg{}, Horn/Johnson~\cite[p.~20]{MR832183}), so that \(AA^\adj = (\det A) \Iq \) and \(A^\adj A = (\det A) \Iq \). If \(Q\) is a \tqqa{matrix} polynomial, then \(Q^\adj \colon  \C\to\Cqq\) defined by \(Q^\adj (z) \defeq  [Q(z)]^\adj\)\index{$Q^\adj (z)$} is obviously a matrix polynomial as well.

\begin{proposition}\label{P1637}%
 Let \(P\) and \(Q\) be complex \tqqa{matrix} polynomials such that \(\det Q\) does not vanish identically and the restriction \(\Phi\) of \(PQ^\inv\) onto \(\D\) belongs to \(\CqD \). Let \(u\in\T\) be a zero of \(\det Q\) with multiplicity \(m>0\). Then \(u\) is either a removable singularity or a simple pole of \(PQ^\inv\). Furthermore, \((\det Q)^{(m)}(u)\neq0\) and
 \[
  \mu\rk*{\set{u}}
  =\frac{-m}{2u (\det Q)^{(m)} (v)}   (PQ^\adj)^{(m - 1)} (u),
 \]
 where \(\mu\) is the \tRHm{} of \(\Phi\).
\end{proposition}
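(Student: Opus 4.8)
The plan is to deduce this proposition as the special case of \rlem{L1445} in which the meromorphic function is the rational function $PQ^\inv$, the region is all of $\C$, and the factorization of the form $F=h^\inv G$ demanded by \rrem{R1336} is furnished explicitly by the classical adjugate.

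First I would record the elementary identity on which everything rests: since $QQ^\adj=Q^\adj Q=(\det Q)\Iq$, at every point of $\C$ at which $\det Q$ does not vanish we have $Q^\inv=(\det Q)^\inv Q^\adj$ and hence $PQ^\inv=(\det Q)^\inv PQ^\adj$. Accordingly, set $\cD\defeq\C$, which is a region containing $\cdisk{r}$ for every $r\in(1,+\infty)$, and put $h\defeq\det Q$ and $G\defeq PQ^\adj$. As observed in the paragraph preceding the proposition, $G$ is a $q\times q$ matrix polynomial, hence holomorphic on $\C$; the scalar polynomial $h$ is holomorphic on $\C$ and, by hypothesis, does not vanish identically; and the identity just noted shows that $F\defeq PQ^\inv$ is a $q\times q$ matrix-valued function meromorphic on $\C$ with $F=h^\inv G$. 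Thus $\cD$, $F$, $G$, and $h$ satisfy all the hypotheses of \rlem{L1445}, while the restriction $\Phi$ of $F=PQ^\inv$ onto $\D$ belongs to $\CqD$ by assumption.

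Since $u\in\T$ is a zero of $h=\det Q$ of multiplicity $m>0$, \rlem{L1445} applies at the point $u$. Its first assertion gives immediately that $u$ is either a removable singularity or a simple pole of $F=PQ^\inv$; its second assertion yields $h^{(m)}(u)=(\det Q)^{(m)}(u)\neq0$ together with
\[
 \mu\rk*{\set{u}}
 =\frac{-m}{2uh^{(m)}(u)}G^{(m-1)}(u)
 =\frac{-m}{2u(\det Q)^{(m)}(u)}(PQ^\adj)^{(m-1)}(u),
\]
which is the asserted formula (the $v$ appearing in the displayed formula of the proposition being a misprint for $u$). I do not expect any genuine obstacle: the substance consists of the two observations that $PQ^\inv=(\det Q)^\inv PQ^\adj$ and that a rational matrix-valued function is meromorphic on $\C$, after which the proposition is simply \rlem{L1445} read off with $h=\det Q$ and $G=PQ^\adj$; everything else is bookkeeping.
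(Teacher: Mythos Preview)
Your proposal is correct and follows essentially the same approach as the paper: set \(\cD=\C\), \(h=\det Q\), \(G=PQ^\adj\), observe \(F=PQ^\inv=h^\inv G\) via the adjugate identity, and apply \rlem{L1445}. Your write-up is just a slightly more detailed version of the paper's two-sentence proof, and your observation that the \(v\) in the displayed formula is a misprint for \(u\) is right.
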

\begin{proof}
 The functions \(G\defeq PQ^\adj\) and \(h\defeq\det Q\) are holomorphic in \(\C\) such that \(h\) does not vanish identically, and \(F\defeq PQ^\inv\) is meromorphic in \(\C\) and admits the representation \(F=h^\inv G\). Hence, the application of \rlem{L1445} completes the proof.
\end{proof}

\begin{proposition}\label{P1643}
 Let \(Q\) and \(R\) be complex \tqqa{matrix} polynomials such that \(\det Q\) does not vanish identically and the restriction \(\Phi\) of \(Q^\inv R\) onto \(\D\) belongs to \(\CqD \). Let \(u\in\T\) be a zero of \(\det Q\) with multiplicity \(m>0\). Then \(u\) is either a removable singularity or a simple pole of \(Q^\inv R\). Furthermore, \((\det Q)^{(m)}(u)\neq0\) and
 \[
  \mu\rk*{\set{u}}
  =\frac{-m}{2u (\det Q)^{(m)} (v)}   (Q^\adj R)^{(m - 1)} (u),
 \]
 where \(\mu\) is the \tRHm{} of \(\Phi\).
\end{proposition}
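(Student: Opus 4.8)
The plan is to reduce \rprop{P1643} to \rlem{L1445} in exactly the same way that \rprop{P1637} was reduced to it, the only change being on which side of \(R\) the classical adjugate \(Q^\adj\) is placed. The decisive point is that \(\det Q\) is scalar-valued and hence commutes with every matrix-valued factor, so that the identities \(QQ^\adj=Q^\adj Q=(\det Q)\Iq\) yield, on the set where \(\det Q\) does not vanish, \(Q^\inv=(\det Q)^\inv Q^\adj\) and therefore
\[
 Q^\inv R=(\det Q)^\inv(Q^\adj R).
\]
Both sides are meromorphic in \(\C\) and agree on the open, dense set \(\setaa{z\in\C}{\det Q(z)\neq0}\), so this is an identity of meromorphic functions in \(\C\).

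Concretely, I would set \(h\defeq\det Q\) and \(G\defeq Q^\adj R\). Then \(G\colon\C\to\Cqq\) and \(h\colon\C\to\C\) are matrix, resp.\ scalar, polynomials, hence holomorphic in the region \(\C\); by hypothesis \(h\) does not vanish identically; and \(F\defeq Q^\inv R\) is meromorphic in \(\C\) with \(F=h^\inv G\). Moreover \(\cdisk{r}\subseteq\C\) for every \(r\in(1,+\infty)\), and the restriction \(\Phi\) of \(F\) onto \(\D\) belongs to \(\CqD\) by assumption, while \(u\in\T\) is a zero of \(h\) of multiplicity \(m>0\). Thus all hypotheses of \rlem{L1445} are satisfied with \(\cD=\C\). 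Its part~(a) then shows that \(u\) is either a removable singularity or a simple pole of \(F=Q^\inv R\), and its part~(b) gives \(h^{(m)}(u)=(\det Q)^{(m)}(u)\neq0\) together with
\[
 \mu\rk*{\set{u}}=\frac{-m}{2uh^{(m)}(u)}G^{(m-1)}(u)=\frac{-m}{2u(\det Q)^{(m)}(u)}(Q^\adj R)^{(m-1)}(u),
\]
which is the asserted formula (the \(v\) appearing in the statement being a misprint for \(u\)).

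I do not anticipate a real obstacle: the argument is a verbatim copy of the proof of \rprop{P1637}, and everything of substance — simplicity of the pole (via \rprop{P1309}), non-vanishing of \(h^{(m)}(u)\), and the residue/mass formula — is already packaged in \rlem{L1344} and \rlem{L1445}. The only point deserving a moment's care is the justification of \(Q^\inv R=(\det Q)^\inv Q^\adj R\) as an identity of meromorphic functions rather than merely as a pointwise identity off the zero set of \(\det Q\), together with the observation that, \(\det Q\) being a scalar, it is immaterial whether the factor \((\det Q)^\inv\) is written to the left, in the middle, or to the right — this is exactly why the ``left'' case \(Q^\inv R\) is handled by the same scheme as the ``right'' case \(PQ^\inv\).
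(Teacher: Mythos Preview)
Your proof is correct, but it takes a different (and arguably more direct) route than the paper's one-line argument. The paper reduces \rprop{P1643} to the already-proved \rprop{P1637} by transposition: since \((Q^\inv R)^\tra=R^\tra(Q^\tra)^\inv\) has the shape \(P'(Q')^\inv\), one applies \rprop{P1637} to \(\Phi^\tra\) and then translates back, using \(\det Q^\tra=\det Q\), \((Q^\tra)^\adj=(Q^\adj)^\tra\), and the fact that the Riesz--Herglotz measure of \(\Phi^\tra\) is \(\mu^\tra\). You instead bypass \rprop{P1637} entirely and go straight to \rlem{L1445} with \(h=\det Q\) and \(G=Q^\adj R\), exploiting \(Q^\adj Q=(\det Q)\Iq\) to write \(Q^\inv R=(\det Q)^\inv(Q^\adj R)\). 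Your approach avoids having to verify that \(\Phi^\tra\in\CqD\) and that transposition commutes appropriately with the Riesz--Herglotz correspondence; the paper's approach is shorter to state but leaves these transpose compatibilities implicit. Both are perfectly valid, and in fact your argument is precisely the mirror of the paper's own proof of \rprop{P1637}.
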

\bproof
 Apply \rprop{P1637} to \(\rk{Q^\inv R}^\tra\).
\eproof

 As usual, if \(\cM\) is a finite subset of \(\Cpq \), then the notation \symb{\sum_{A\in\cM} A}{s} should be understood as \(\Opq \) in the case that \(\cM\) is empty. In the following, we continue to use the notations \(\lebc \)\index{l@$\lebc $} and \(\kron{u}\)\index{d@$\kron{u}$} to designate the linear Lebesgue measure on \((\T,\BsaT)\) and the Dirac measure on \((\T,\BsaT)\) with unit mass at \(u\in\T\), respectively. Now we are able to derive the main result of this section.

\begin{theorem}\label{T1648}
 Let \(r\in(1,+\infty)\), let \(\cD\) be a region of \(\C\) such that \(\cdisk{r}\subseteq\cD\), and let \(F\) be a \tqqa{matrix-valued} function meromorphic in \(\cD\) such that the restriction \(\Phi\) of \(F\) onto \(\D\) belongs to \(\CqD\). In view of \rrem{R1336}, let \(G\colon\cD\to\Cqq\) and \(h\colon\cD\to\C\) be holomorphic functions such that \(h\) does not vanish identically in \(\cD\) and that \(F=h^\inv G\) holds true. Then \(\cN \defeq  \setaa{u\in\T}{h(u) = 0}\) is a finite subset of \(\T \) and the following statements hold true:
 \benui
  \item\label{T1648.a} For all \(u\in\cN\), the inequality \(h^{(m_u)}  (u)\ne 0\) holds true, where \(m_u\) is the multiplicity of \(u\) as zero of \(h\), and the matrix
  \[
   W_u
   \defeq\frac{-m_u}{2uh^{(m_u)} (u)}G^{(m_u- 1)} (u)
  \]
  is well defined and \tnnH{}, and coincides with \(\mu(\set{u})\), where \(\mu\) is the \tRHm{} of \(\Phi\).  
 \item\label{T1648.b} Let \(\Delta \colon\cD\setminus\cN\to\Cqq\) be defined by
\begin{equation}\label{T1648.D1}
 \Delta (z)
 \defeq  \sum_{u\in\cN} \frac{u+z}{u-z} W_u.
\end{equation}
 Then \(\Theta \defeq F-\Delta\) is a \tqqa{matrix-valued} function meromorphic in \(\cD\) which is holomorphic in \(\cdisk{r_0}\) for some \(r_0\in(1,r)\) and the restrictions of \(\Theta\) and \(\Delta\) onto \(\D\) both belong to \(\CqD \).
\item\label{T1648.c} The \tRHm{} \(\mu\) of \(\Phi\) admits for all \(B\in\BsaT  \) the representation
\begin{equation}\label{T1648.B1}
\mu (B) = \frac{1}{2\pi} \int_B \re  \Theta (\zeta) \lebca{\dif\zeta} +  \sum_{u\in\cN} W_u   %
\kron{u} (B).
\end{equation}
\eenui
\end{theorem}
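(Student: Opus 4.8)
The plan is to reduce everything to the single-zero analysis of \rlem{L1445}, combined with the integral representation \eqref{NGZ} in \rthm{D2.2.2} and the absolutely continuous case \rlem{M1.2}. First, since \(h\) is holomorphic and does not vanish identically on the region \(\cD\), its zero set is discrete and relatively closed in \(\cD\); as \(r>1\) we have \(\T\subseteq\cdisk{r}\subseteq\cD\), and compactness of \(\T\) forces \(\cN\) to be finite. Part~(a) is then immediate from \rlem{L1445}: for each \(u\in\cN\) that lemma yields \(h^{(m_u)}(u)\neq0\), so that \(W_u\) is well defined, and \eqref{L1445.B1} reads \(W_u=\mu(\set{u})\); since \(\mu\in\MggqT\), the value \(\mu(\set{u})\) lies in \(\Cggq\), i.e., \(W_u\) is \tnnH{}.

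For part~(b) I would first fix a suitable \(r_0\). The zeros of \(h\) lying in \(\setaa{z\in\C}{1<\abs{z}<r}\) cannot accumulate at any point of \(\T\) (an accumulation point would be a non-isolated zero of \(h\) in \(\cD\)); since \(\T\) is compact, these zeros stay at a positive distance from \(\T\), so there is an \(r_0\in(1,r)\) such that \(h\) has no zero in \(\setaa{z\in\C}{1<\abs{z}\le r_0}\). Next, by part~(b) of \rthm{D2.2.2} applied with the measure \(\kron{u}W_u\in\MggqT\) and Hermitian part \(\Oqq\), each summand \(z\mapsto\frac{u+z}{u-z}W_u\) of \(\Delta\) belongs to \(\CqD\) with \tRHm{} \(\kron{u}W_u\) and imaginary part \(\Oqq\) at the origin; since a finite sum of \Car{} functions is again a \Car{} function (\(\Cggq\) being closed under addition), the restriction of \(\Delta\) onto \(\D\) belongs to \(\CqD\) with \tRHm{} \(\mu_d\defeq\sum_{u\in\cN}W_u\kron{u}\) and vanishing imaginary part at the origin. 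Now \(\Theta=F-\Delta\) is meromorphic in \(\cD\) and holomorphic in \(\D\), and its only possible singularities in \(\cdisk{r_0}\) are zeros of \(h\); none lie in \(\setaa{z\in\C}{1<\abs{z}<r_0}\) by the choice of \(r_0\), and those on \(\T\) belong to \(\cN\). For \(u\in\cN\), \rprop{P1309} — together with part~(d) of \rlem{L1445} in the case that \(u\) is merely a removable singularity of \(F\), in which case \(W_u=\Oqq\) — gives \(\Res(F,u)=-2uW_u\), while \(\frac{u+z}{u-z}=-1+\frac{2u}{u-z}\) shows \(\Res(\Delta,u)=-2uW_u\); since \(F\) and \(\Delta\) have at most a simple pole at \(u\), the point \(u\) is a removable singularity of \(\Theta\). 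Hence \(\Theta\) extends holomorphically to \(\cdisk{r_0}\).

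To finish part~(b) I would identify the \tRHm{} of the restriction of \(\Theta\) onto \(\D\). Put \(\mu_c\defeq\mu-\mu_d\); since \(\mu(\set{u})=W_u\) for all \(u\in\cN\), one checks \(\mu_c(B)=\mu(B\setminus\cN)\) for every \(B\in\BsaT\), so \(\mu_c\in\MggqT\). Splitting the integral in \eqref{NGZ} over \(\cN\) and over \(\T\setminus\cN\) gives, for \(z\in\D\), the identity \(\Phi(z)-\iu\im\Phi(0)=\int_\T\frac{\zeta+z}{\zeta-z}\mu_c(\dif\zeta)+\Delta(z)\), hence \(\Theta(z)=\iu\im\Phi(0)+\int_\T\frac{\zeta+z}{\zeta-z}\mu_c(\dif\zeta)\) there; by part~(b) of \rthm{D2.2.2} this function belongs to \(\CqD\), and since \(\Delta(0)=\sum_{u\in\cN}W_u\) is Hermitian, \(\im\Theta(0)=\im\Phi(0)\), so by the uniqueness in part~(a) of \rthm{D2.2.2} the \tRHm{} of the restriction of \(\Theta\) onto \(\D\) is \(\mu_c\). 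For part~(c), I would apply \rlem{M1.2} to \(\Theta\) regarded as a holomorphic function on the region \(\cdisk{r_0}\) (which contains \(\cdisk{r_1}\) for every \(r_1\in(1,r_0)\)): this gives \(\mu_c(B)=\frac{1}{2\pi}\int_B\re\Theta(\zeta)\lebca{\dif\zeta}\) for all \(B\in\BsaT\), and adding \(\mu_d(B)=\sum_{u\in\cN}W_u\kron{u}(B)\) yields \eqref{T1648.B1}.

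I expect part~(b) to be the main obstacle: one has to pin down \(r_0\) via non-accumulation of the zeros of \(h\) at \(\T\), verify the residue cancellation at every \(u\in\cN\) robustly enough to cover the case of a removable singularity, and keep the decomposition \(\mu=\mu_c+\mu_d\) organized so that \rthm{D2.2.2} can be invoked cleanly; parts~(a) and~(c) are then short.
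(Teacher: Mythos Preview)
Your proof is correct and follows essentially the same route as the paper: part~(a) via \rlem{L1445}, removability of the points of \(\cN\) for \(\Theta\) using \rprop{P1309}, identification of the \tRHm{} of \(\Theta|_\D\) as \(\mu-\sum_{u\in\cN}W_u\kron{u}\) through \rthm{D2.2.2}, and part~(c) via \rlem{M1.2}. The only cosmetic difference is that the paper shows removability by computing \(\lim_{z\to u}(z-u)\Theta(z)=0\) directly, whereas you phrase the same cancellation in terms of residues; and the paper deduces the existence of \(r_0\) after establishing holomorphy on \(\D\cup\T\), while you isolate \(r_0\) first from the discreteness of the zero set of \(h\)---both are equivalent.
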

\bproof
 Since \(h\) is a holomorphic function in \(\cD\) which does not vanish identically in \(\cD\) and since \(\T\) is a bounded subset of  the interior of \(\cD\), the set \(\cN\) is finite.
 
 \eqref{T1648.a} This follows from \rlem{L1445}.
 
 \eqref{T1648.b} Obviously, \(\Theta\) is meromorphic in \(\cD\). According to \rlem{L1445}, each \(u\in\cN\) is either a removable singularity or a sinple pole of \(F \) and \(\mu (\set{u}) = W_u\) holds true. \rprop{P1309} yields then
\begin{equation}\label{T1648.1}
 \lim_{z\to u}\ek*{(z-u) F (z)}
 = -2u W_u
\end{equation}
 for each \(u\in\cN\). Obviously, \(\Theta\) is holomorphic at all points \(z\in\T \setminus\cN\). 
 
 Let us now assume that \(u\) belongs to \(\cN\). Then \(h(u) = 0\) and there is a positive real number \(r_u\) such that \(K\defeq\diskaa{u}{r_u}\) is a subset of \(\cD\) and \(h (z)\ne 0\) for all \(z\in K \setminus \set{u}\). In particular, the restriction \(\theta\) of \(\Theta\) onto \(K \setminus \set{u}\) is holomorphic and 
\begin{equation}\label{T1648.2}
(z-u)\theta  (z)
= (z-u) F (z) + (u+z) W_u - (z-u) \sum_{\zeta\in\cN \setminus \set{u}} \frac{\zeta + z}{\zeta - z} W_\zeta 
\end{equation}
 is fulfilled for each \(z\in K \setminus \set{u}\). Consequently, \eqref{T1648.1} and \eqref{T1648.2} provide us
\[\begin{split}
 \Oqq
 &= -2u W_u + (u+u) W_u - (u-u)  \sum_{\zeta\in\cN \setminus \set{u}} \frac{\zeta + z}{\zeta - z} W_\zeta\\
 &= \lim_{z\to u}\ek*{(z-u) F (z)}+ \rk{u +\lim_{z\to u} z} W_u- \ek*{\rk{\lim_{z\to u} z} -u }  \sum_{\zeta\in\cN \setminus \set{u}} \frac{\zeta + z}{\zeta - z} W_u\\
 &= \lim_{z\to u} \ek*{ (z-u) F (z) + (u+z) W_u - (z-u) \sum_{\zeta\in\cN \setminus \set{u}} \frac{\zeta + z}{\zeta - z} W_\zeta }
 = \lim_{z\to u}\ek*{(z-u) \theta   (z)}.
\end{split}\]
 In view of Riemann's theorem on removable singularities, this implies that \(u\) is a removable singularity for \(\theta \). In particular, \(\Theta\) is holomorphic at \(u\). Thus, \(\Theta\) is holomorphic at each \(\zeta\in\T \). Taking into account \(\D \cap\cN = \emptyset\), we see then that \(\Theta\) is holomorphic at each point \(z\in\D \cup\T \). Since \(\Theta\) is meromorphic in \(\cD\) and \(\cdisk{r}\) is bounded, \(\Theta\) has only a finite number of poles in \(\cdisk{r}\setminus(\D\cup\T)\). Thus, there is an \(r_0\in(1,r)\) such that \(\Theta\) is holomorphic in \(\cdisk{r_0}\). In particular, the restriction \(\Psi\) of \(\Theta\) onto \(\D\) is holomorphic. Because of \(\D \cap\cN =\emptyset\), we get
\begin{equation}\label{T1648.3}
 \Theta (z)
 =  F (z)  -\Delta (z)
 =\Phi (z) - \sum_{u\in\cN} \frac{u + z}{u - z} W_u
\end{equation}
 for each \(z\in\D \). Because of \(\mu (\set{u}) = W_u\) for each \(u\in\cN\), we conclude that
\begin{equation}\label{T1648.4}
 \rho
 \defeq  \mu - \sum_{u\in\cN} W_u\kron{u}
\end{equation}
 fulfills \(\rho (\BsaT  ) \subseteq \Cggq \) and, hence, that \(\rho\) belongs to \(\MggqT\). Since \(\mu\) is the \tRHm{} of \(\Phi\), we have \eqref{NGZ} for each \(z\in\D \). Thus, we obtain from \eqref{T1648.3} then
\[\begin{split}
\Theta (z) &= \int_\T   \frac{\zeta + z}{\zeta - z} \mu (\dif\zeta) + \iu\im  \Phi (0) - \sum_{u\in\cN}
\rk*{ \int_\T   \frac{\zeta + z}{\zeta - z} \kron{u} (\dif\zeta)} W_u  \\
&= \int_\T   \frac{\zeta + z}{\zeta - z} \rho (\dif\zeta) + \iu\im  \Phi (0) 
\end{split}\]
 for every choice of \(z\) in \(\D \). Consequently, from \rthm{D2.2.2} we see that \(\Psi\) belongs to \(\CqD \) and that \(\rho\) is the \tRHm{} of \(\Psi\). Since the matrix \(W_u\) is \tnnH{} for all \(u\in\cN\), \rthmp{D2.2.2}{D2.2.2.b} yields in view of \eqref{T1648.D1} furthermore, that the restriction of \(\Delta\) onto \(\D\) belongs to \(\CqD\) as well.

 \eqref{T1648.c} Applying \rlem{M1.2} shows then that 
\(
 \rho (B)
 =\frac{1}{2\pi} \int_B \re  \Theta (\zeta) \lebca{\dif\zeta}
\)
 holds true for each \(B\in\BsaT  \). Thus, from \eqref{T1648.4}, for each \(B\in\BsaT  \), we get \eqref{T1648.B1}.
\eproof

 A closer look at \rthm{T1648} and its proof shows that the \tRHm{s} \(\rho\) and \(\sum_{u\in\cN} W_u\kron{u}\) of \(\Psi\) and the restriction of \(\Delta\) onto \(\D\), respectively, are exactly the absolutely continuous and singular part in the Lebesgue decomposition of the \tRHm{} of \(\Phi\) with respect to \(\lebc \). In particular, the singular part is a discrete measure which is concentrated on a finite number of points from \(\T\) and there is no nontrivial singular continuous part. The absolutely continuous part with respect to \(\lebc \) possesses a continuous Radon-Nikodym density with respect to \(\lebc \).

\begin{theorem}\label{M3.2}
 Let \(P\) and \(Q\) be \tqqa{matrix} polynomials such that \(\det Q\) does not vanish identically and that the restriction \(\Phi\) of \(PQ^\inv \) onto \(\D\) belongs to \(\CqD \). Then \(\cN \defeq  \setaa{u\in\T}{\det Q (u) = 0}\) is a finite subset of \(\T \) and the following statements hold true:
\benui
 \item For all \(u\in\cN\), the inequality \((\det Q)^{(m_u)}  (u)\ne 0\) holds true, where \(m_u\) is the multiplicity of \(u\) as zero of \(\det Q\), and
  \[
  W_u
  \defeq\frac{-m_u}{2u(\det Q)^{(m_u)} (u)}   (PQ^\adj)^{(m_u - 1)} (u)
 \]
  is a well-defined and \tnnH{} matrix which coincides with \(\mu(\set{u})\), where \(\mu\) is the \tRHm{} of \(\Phi\).  
 \item  Let \(\Delta \colon\cD\setminus\cN\to\Cqq\) be defined by \eqref{T1648.D1}.
 Then \(\Theta \defeq PQ^\inv-\Delta\) is a rational \tqqa{matrix-valued} function which is holomorphic in \(\cdisk{r}\) for some \(r\in(1,+\infty)\) and the restrictions of \(\Theta\) and \(\Delta\) onto \(\D\) both belong to \(\CqD \).
\item The \tRHm{} \(\mu\) of \(\Phi\) admits the representation \eqref{T1648.B1} 
for all \(B\in\BsaT  \).
\eenui
\end{theorem}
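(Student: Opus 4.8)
The plan is to obtain \rthm{M3.2} as the rational specialization of \rthm{T1648}. First I would put \(\cD\defeq\C\), which is a region of \(\C\) with \(\cdisk{r}\subseteq\cD\) for every \(r\in(1,+\infty)\). Mimicking the reduction already carried out in the proof of \rprop{P1637}, I would set \(G\defeq PQ^\adj\) and \(h\defeq\det Q\). Both are a matrix, resp.\ scalar, polynomial, hence holomorphic in \(\cD\); moreover \(h=\det Q\) does not vanish identically in \(\cD\) by assumption. From the identity \(QQ^\adj=(\det Q)\Iq\) one reads off that \(PQ^\inv=(\det Q)^\inv PQ^\adj=h^\inv G\) wherever \(\det Q\neq0\), so that \(F\defeq PQ^\inv\) is a \tqqa{matrix-valued} function meromorphic in \(\cD\) with \(F=h^\inv G\) whose restriction \(\Phi\) onto \(\D\) belongs to \(\CqD\). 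Thus all hypotheses of \rthm{T1648} are in force, and the set \(\cN=\setaa{u\in\T}{h(u)=0}\) appearing there coincides with \(\setaa{u\in\T}{\det Q(u)=0}\), which is finite since \(\det Q\) is a non-zero polynomial.

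Next I would simply transcribe the three conclusions of \rthm{T1648} under these substitutions. \rthmp{T1648}{T1648.a} gives at once that \((\det Q)^{(m_u)}(u)\neq0\) for every \(u\in\cN\) (with \(m_u\) the multiplicity of \(u\) as a zero of \(\det Q\)), that the matrix \(W_u=\frac{-m_u}{2u(\det Q)^{(m_u)}(u)}(PQ^\adj)^{(m_u-1)}(u)\) is well defined and \tnnH{}, and that \(W_u=\mu(\set{u})\). \rthmp{T1648}{T1648.b} gives, with \(\Delta\) defined by \eqref{T1648.D1}, that \(\Theta\defeq PQ^\inv-\Delta\) is meromorphic in \(\cD=\C\), holomorphic in \(\cdisk{r_0}\) for some \(r_0\in(1,+\infty)\), and that the restrictions of \(\Theta\) and \(\Delta\) onto \(\D\) both belong to \(\CqD\); finally \rthmp{T1648}{T1648.c} gives the representation \eqref{T1648.B1} of \(\mu\). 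The one point not literally contained in \rthm{T1648} is that \(\Theta\) is actually \emph{rational}: the entries of \(PQ^\inv=(\det Q)^\inv PQ^\adj\) are rational functions, as are the entries of \(\Delta(z)=\sum_{u\in\cN}\frac{u+z}{u-z}W_u\), so the entries of \(\Theta=PQ^\inv-\Delta\) are rational; being holomorphic on \(\D\cup\T\) (as shown inside the proof of \rthm{T1648}) and having only finitely many poles, \(\Theta\) is holomorphic on \(\cdisk{r}\) for some \(r\in(1,+\infty)\).

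I do not anticipate a serious obstacle: the substantive content — that poles on \(\T\) are simple, the explicit value of \(\mu(\set{u})\), and the Lebesgue decomposition with continuous absolutely continuous density — was already established in \rlem{L1344}, \rlem{L1445}, and \rthm{T1648}. All that remains is the elementary algebraic identity \(PQ^\inv=(\det Q)^\inv PQ^\adj\), which transports the meromorphic hypothesis of \rthm{T1648} into the rational setting, together with the trivial observation that a rational matrix-valued function holomorphic on \(\D\cup\T\) stays holomorphic on a slightly larger concentric disk.
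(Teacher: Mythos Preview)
Your proposal is correct and follows exactly the paper's approach: the paper's proof simply states that \rthm{M3.2} is an immediate consequence of \rthm{T1648} with the choices \(\cD=\C\), \(h=\det Q\), and \(G=PQ^\adj\). Your additional remark that \(\Theta\) is rational because both \(PQ^\inv=(\det Q)^\inv PQ^\adj\) and \(\Delta\) have rational entries makes explicit the only point not literally contained in the meromorphic statement, and is correct.
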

\begin{proof}
 \rthm{M3.2} is an immediate consequence of \rthm{T1648} if one chooses \(\cD=\C\), \(h=\det Q\) and \(G=PQ^\adj\).
\end{proof}

\section{On the truncated matricial trigonometric moment problem}\label{S1029}
 A matricial version of a theorem due to G.~Herglotz shows in particular that if \(\mu\) belongs to \(\MggqT\), then it is uniquely determined by the sequence \((\Fcm{j})_{j=-\infty}^\infty\) of its Fourier coefficients given by \eqref{D2.2.9}. To recall this theorem in a version which is convenient for our further considerations, let us modify the notion of \Toe{} non-negativity.  Obviously, if  \(\kappa\in\NOinf \) and if \(\Cbk \) is a \tTnnd{} sequence, then \(C_{-j} = C_j^\ad\) for each \(j\in\mn{-\kappa}{\kappa}\). Thus, if \(\kappa\in\NOinf \), then a sequence \(\Cska \) is called \noti{\tTnnd{}} (resp.\ \noti{\tTpd{}}) if \(\Cbk \) is \tTnnd{} (resp.\ \tTpd{}), where \(C_{-j} \defeq  C_j^\ad\) for each \(j\in\mn{0}{\kappa}\). 

\begin{theorem}[G. Herglotz]\label{D2.2.1'}
 Let \(\Csinf \) be a sequence of complex \tqqa{matrices}. Then there exists a \(\mu\in\MggqT\) such that \(\Fcm{j} = C_j\) for each \(j\in\NO \) if and only if the sequence \(\Csinf \) is \tTnnd{}. In this case, the measure \(\mu\) is unique.
\end{theorem}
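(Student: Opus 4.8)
\emph{Proof proposal.} The plan is to handle the two implications separately and to derive the existence assertion from \rthm{D2.2.2}. For necessity, suppose \(\mu\in\MggqT\) fulfills \(\Fcm{j}=C_j\) for each \(j\in\NO\). By \eqref{D2.2.9}, then \(C_{j-k}=\int_\T\zeta^{k-j}\mu(\dif\zeta)\) for \(j,k\in\NO\), so for arbitrary \(n\in\NO\) and \(w_0,\dotsc,w_n\in\C^q\), setting \(f(\zeta)\defeq\sum_{k=0}^n\zeta^kw_k\) and \(W\defeq\col(w_j)_{j=0}^n\), the rules for integration with respect to \tnnH{} measures yield \(W^\ad\Tu{n}W=\sum_{j,k=0}^nw_j^\ad C_{j-k}w_k=\int_\T f(\zeta)^\ad\mu(\dif\zeta)f(\zeta)\ge0\); hence each \(\Tu{n}\) is \tnnH{}, i.e.\ \(\Csinf\) is \tTnnd{}.

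For the converse, assume \(\Csinf\) is \tTnnd{}. First I would record some a priori information: for \(j\in\N\), the block submatrix \(\begin{bmatrix}C_0&C_j^\ad\\C_j&C_0\end{bmatrix}\) of \(\Tu{j}\), indexed by \(\set{0,j}\), is \tnnH{}, whence \(C_0=C_0^\ad\in\Cggq\) and the sequence \((C_j)_{j=1}^\infty\) is bounded; consequently \(\Phi(z)\defeq C_0+2\sum_{j=1}^\infty C_jz^j\) converges locally uniformly on \(\D\) and is holomorphic there, with \(\im\Phi(0)=\im C_0=\Oqq\). The heart of the proof is to show \(\re\Phi(z)\in\Cggq\) for every \(z\in\D\). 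Here I would fix \(z\in\D\), put \(a_k\defeq\overline{z}^k\) for \(k\in\NO\), and establish, by summing the geometric series in \(k\) for each fixed value of \(m=j-k\), the Poisson-type identity \(\re\Phi(z)=(1-\abs{z}^2)\sum_{j,k=0}^\infty C_{j-k}\overline{a_j}a_k\), the rearrangement being admissible since, by the boundedness of the \(C_j\) and \(\abs{z}<1\), the double series converges absolutely. Then, for \(v\in\C^q\) and \(w_j\defeq a_jv\), the square partial sums satisfy \(\sum_{j,k=0}^nw_j^\ad C_{j-k}w_k=W_n^\ad\Tu{n}W_n\ge0\) with \(W_n\defeq\col(w_j)_{j=0}^n\), and passing to the limit gives \(v^\ad[\re\Phi(z)]v=(1-\abs{z}^2)\sum_{j,k=0}^\infty w_j^\ad C_{j-k}w_k\ge0\); as \(v\) and \(z\) are arbitrary, \(\re\Phi(z)\in\Cggq\) on \(\D\), i.e.\ \(\Phi\in\CqD\).

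Having \(\Phi\in\CqD\), I would invoke \rthmp{D2.2.2}{D2.2.2.a} to obtain a unique \(\mu\in\MggqT\) satisfying \eqref{NGZ} for all \(z\in\D\) together with \(\Phi(z)-\iu\im\Phi(0)=\Fcm{0}+2\sum_{j=1}^\infty\Fcm{j}z^j\); since \(\im\Phi(0)=\Oqq\), comparing this power series with the one defining \(\Phi\) forces \(\Fcm{j}=C_j\) for all \(j\in\NO\), which settles existence. For uniqueness, I would note that any \(\nu\in\MggqT\) with \(\int_\T\zeta^{-j}\nu(\dif\zeta)=C_j\) for all \(j\in\NO\) satisfies, using \(\frac{\zeta+z}{\zeta-z}=1+2\sum_{j=1}^\infty z^j\zeta^{-j}\) for \(z\in\D\) and \(\zeta\in\T\), the identity \(\int_\T\frac{\zeta+z}{\zeta-z}\nu(\dif\zeta)=C_0+2\sum_{j=1}^\infty C_jz^j=\Phi(z)-\iu\im\Phi(0)\) for each \(z\in\D\); thus \(\nu\) is a \tRHm{} of \(\Phi\), and the uniqueness in \rthmp{D2.2.2}{D2.2.2.a} gives \(\nu=\mu\).

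I expect the positivity step \(\re\Phi(z)\in\Cggq\) on \(\D\) to be the main obstacle. It requires setting up the Poisson-type summation formula \(\re\Phi(z)=(1-\abs{z}^2)\sum_{j,k\ge0}C_{j-k}z^j\overline{z}^k\) with care (in particular keeping the \(z\)-versus-\(\overline{z}\) bookkeeping straight), justifying the rearrangement of the double series through the geometric bound coming from the boundedness of the \(C_j\), and recognizing the resulting quadratic form in \(v\) as a limit of the manifestly \tnnH{} forms \(W_n^\ad\Tu{n}W_n\). The remaining steps are routine or direct appeals to \rthm{D2.2.2}.
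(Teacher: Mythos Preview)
Your argument is correct. The paper does not give its own proof of this theorem; it simply cites \cite[\cthm{2.2.1}]{MR1152328}, noting only the trivial observation that \(\Fcm{-j}=(\Fcm{j})^\ad\). Your proposal supplies a complete derivation, reducing the moment problem to the Riesz--Herglotz representation \rthm{D2.2.2}: the necessity direction is the standard Gram-matrix computation, and for sufficiency you build the candidate \Car{} function \(\Phi(z)=C_0+2\sum_{j\ge1}C_jz^j\), verify \(\re\Phi(z)\in\Cggq\) through the Poisson-type identity \(\re\Phi(z)=(1-\abs{z}^2)\sum_{j,k\ge0}C_{j-k}z^j\overline{z}^k\) together with the \tnn{}ity of the square partial sums \(W_n^\ad\Tu{n}W_n\), and then read off \(\mu\) and its uniqueness from \rthmp{D2.2.2}{D2.2.2.a}. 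All steps---the boundedness of \((C_j)\) from the \(2\times2\) principal submatrix, the absolute convergence justifying the rearrangement, and the matching of Taylor coefficients---are sound. This is a standard and self-contained route; since the paper offers no in-text argument to compare against, there is nothing further to contrast.
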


 In view of the fact that \(\Fcm{-j} = (\Fcm{j})^\ad\) holds true for each \(\mu \in\MggqT\) and each  \(j\in\Z\), a proof of \rthm{D2.2.1'} is given, \eg{}, in~\cite[\cthm{2.2.1}, pp.~70/71]{MR1152328}.

 In the context of  the truncated trigonometric moment problem, only a finite sequence of Fourier coefficients is prescribed: 
\begin{description}
 \item[\TMP{}:] Let \(n\in\NO \) and let \(\Csn \) be a sequence of complex \tqqa{matrices}. Describe the set \symb{\MggqTcn }{m} of all \(\mu\in\MggqT\) which fulfill \(\Fcm{j} = C_j\) for each \(j\in\Z_{0,n}\).\index{\TMP{}}
\end{description}

 The answer to the question of solvability of \tpTMP{} is as follows:

\begin{theorem}\label{D3.4.2}
 Let  \(n\in\NO \) and let \(\Csn \) be a sequence of complex \tqqa{matrices}. Then \(\MggqTcn \) is non-empty if and only if the sequence \(\Csn \) is \tTnnd{}. 
\end{theorem}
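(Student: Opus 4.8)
The plan is to prove both implications directly, the substantial one being sufficiency.

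\textbf{Necessity.} This is a short computation. Given \(\mu\in\MggqTcn\), I would fix \(v_0,\dotsc,v_n\in\C^q\), set \(v\defeq\col(v_j)_{j=0}^n\), and consider the \(\C^q\)\nobreakdash-valued trigonometric polynomial \(f(\zeta)\defeq\sum_{k=0}^n\zeta^kv_k\) on \(\T\). Using the integration calculus for \tnnH{} measures (Kats~\cite{MR0080280}, Rosenberg~\cite{MR0163346}), the integral \(\int_\T f(\zeta)^\ad\mu(\dif\zeta)f(\zeta)\) is \tnn{}. Since \(\overline{\zeta^j}=\zeta^{-j}\) for \(\zeta\in\T\), expanding the integrand and using \eqref{D2.2.9} together with the convention \(C_{-j}=C_j^\ad\) yields
\[
 0\leq\int_\T f(\zeta)^\ad\mu(\dif\zeta)f(\zeta)
 =\sum_{j=0}^n\sum_{k=0}^n v_j^\ad\rk*{\int_\T\zeta^{-(j-k)}\mu(\dif\zeta)}v_k
 =\sum_{j=0}^n\sum_{k=0}^n v_j^\ad C_{j-k}v_k
 =v^\ad\Tu{n}v .
\]
As \(v_0,\dotsc,v_n\) were arbitrary, \(\Tu{n}\) is \tnnH{}, i.e., \(\Csn\) is \tTnnd{}.

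\textbf{Sufficiency.} Assume \(\Csn\) is \tTnnd{}. The idea is to extend it to an infinite \tTnnd{} sequence and then apply \rthm{D2.2.1'}. The key auxiliary step is a one-step extension claim: for every \(m\in\NO\) and every \tTnnd{} sequence \((C_j)_{j=0}^m\) there exists \(C_{m+1}\in\Cqq\) such that \((C_j)_{j=0}^{m+1}\) is again \tTnnd{}. To justify this, note that in \(\Tu{m+1}=\matauo{C_{j-k}}{j,k=0}{m+1}\) every block except the two extreme corners \(C_{m+1}\) and \(C_{-(m+1)}=C_{m+1}^\ad\) is already prescribed by \((C_j)_{j=0}^m\), and the copy of \(\Tu{m}\) sitting in the upper-left as well as in the lower-right of \(\Tu{m+1}\) being \tnnH{} forces the existence of an admissible corner. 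Concretely this is exactly the non-emptiness of the matrix ball describing the admissible extensions (see~\cite[Part~I]{MR885621I_III_V} and~\cite[\csec{3.4}]{MR1152328}); alternatively it follows from a Schur-complement completion argument (first for positive definite \(\Tu{m}\), then by approximating, replacing \(C_0\) with \(C_0+\varepsilon\Iq\) and letting \(\varepsilon\to0\), using boundedness of the admissible corners). Iterating the claim starting from \(\Csn\) produces a sequence \(\Csinf\) extending \(\Csn\) all of whose finite sections are \tTnnd{}; hence \(\Csinf\) is \tTnnd{}. By \rthm{D2.2.1'} there is a (unique) \(\mu\in\MggqT\) with \(\Fcm{j}=C_j\) for all \(j\in\NO\); in particular \(\Fcm{j}=C_j\) for \(j\in\mn{0}{n}\), so \(\mu\in\MggqTcn\) and \(\MggqTcn\neq\emptyset\).

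\textbf{Main obstacle.} Everything apart from the one-step extension is routine, so that step is the heart of the matter. Besides the two routes indicated above, a fully self-contained alternative is to avoid the extension entirely and build \(\mu\) directly from \(\Tu{n}\) via Naimark's dilation theorem: the block \Toe{} structure of \(\Tu{n}\) turns the canonical forward shift on the (quotient) range space associated with \(\Tu{n}\) into a partial isometry, its unitary dilation \(U\) carries a spectral measure \(E\) on \(\T\), and compressing \(E\) back to the first \(\C^q\)\nobreakdash-coordinate yields a measure lying in \(\MggqTcn\). I would most likely present the extension-plus-Herglotz route, since \rthm{D2.2.1'} is already at hand.
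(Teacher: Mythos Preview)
Your proposal is correct and follows essentially the route the paper points to: the paper does not give a self-contained proof of \rthm{D3.4.2} but cites Ando's Naimark-dilation argument and, as an alternative, the proof in~\cite[\cthm{3.4.2}]{MR1152328} based on the one-step extension result \rthm{D3.4.1}; your main argument (one-step extension via the matrix ball, iterated to an infinite \tTnnd{} sequence, then \rthm{D2.2.1'}) is exactly the latter, and you also sketch the former as a backup. One minor remark: for the iterated extension you can simply invoke the \tcsc{\(\Csn\)} and \rrem{NR1}, which the paper provides precisely for this purpose, rather than appealing to an ad hoc compactness/approximation argument.
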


 Ando~\cite{MR0290157} gave a proof of \rthm{D3.4.2} with the aid of the Naimark Dilation Theorem. An alternate proof stated in~\cite[\cthm{3.4.2}, p.~123]{MR1152328} is connected to \rthm{D3.4.1} below, which gives an answer to the following matrix extension problem:

\begin{description}
\item[\MEP{}:] Let \(n\in\NO \) and let \(\Csn \) be a sequence of complex \tqqa{matrices}. Describe the set \symb{\Tcn}{t} of all complex \tqqa{matrices} \(C_{n+1}\) for which the sequence \(\Cs{n+1} \) is \tTnnd{}.\index{\MEP{}}
\end{description}

 The description of  \(\Tcn \), we will recall here, is given by using  the notion of a matrix ball: For arbitrary choice of \(M\in\Cpq\) , \(A\in\Cpp \), and \(B\in \Cqq\), the set \symb{\gK (M; A,B)}{k} of all \(X\in \Cpq \) which admit a representation \(X=M+AKB\) with some contractive complex \tpqa{matrix} \(K\) is said to be the matrix ball with center \(M\), left semi-radius \(A\), and right semi-radius \(B\). A detailed theory of (more general) operator balls was worked out by Yu.~L.~Smul$'$jan~\cite{MR1073857} (see also~\cite[Section 1.5]{MR1152328} for the matrix case). To give a parametrization of \(\Tcn \) with the aid of matrix balls, we introduce some further notations. For each \(A\in\Cpq\), let \(A^\mpi\)\index{$A^\mpi$} be the Moore-Penrose inverse of \(A\). By definition, \(A^\mpi\) is the unique matrix from \(\Cqp\) which satisfies the four equations
 \begin{align*}
  AA^\mpi A&=A,&
  A^\mpi AA^\mpi&=A^\mpi,&
  (AA^\mpi)^\ad&=AA^\mpi,&
  &\text{and}&
  (A^\mpi A)^\ad&=A^\mpi A.
 \end{align*}

 Let \(\kappa\in\NOinf \) and let \(\Cska \) be a sequence of complex \tqqa{matrices}. For every \(j\in\mn{0}{\kappa}\), let \(C_{-j} \defeq  C_j^\ad\). Furthermore, for each \(n\in \mn{0}{\kappa}\), let\index{t@$\Tu{n}$}\index{y@$\Yu{n}$}\index{z@$\Zu{n}$}
\begin{align}\label{NGL}
\Tu{n}&\defeq\ek{C_{j-k}}_{j,k=0}^n,&
\Yu{n}&\defeq  \col  (C_j)_{j=1}^n,&
&\text{and}&
\Zu{n}&\defeq\ek{C_n, C_{n-1},\dotsc, C_1}.
\end{align}
 Let
\begin{align}\label{MLR1}
 \Mu{1}&\defeq  \Oqq,& \Lu{1}&\defeq  C_0,&\tand{}\Ru{1}&\defeq  C_0. 
\end{align}
 If \(\kappa \ge 1\), then, for each \(n\in\mn{1}{\kappa}\), let\index{m@$\Mu{n}$}\index{l@$\Lu{n}$}\index{r@$\Ru{n}$}
\begin{align}\label{MLR}
 \Mu{n+1}&\defeq  \Zu{n}\Tu{n-1}^\dagger  \Yu{n},&\Lu{n+1}&\defeq  C_0 - \Zu{n} \Tu{n-1}^\dagger  \Zu{n}^\ad,&\tand{}\Ru{n+1}&\defeq  C_0 - \Yu{n}^\ad \Tu{n-1}^\dagger  \Yu{n}.
\end{align}
 In order to formulate an answer to \tpMEP{}, we observe, that, if \(\Cska \) is \tTnnd{}, then, for each \(n\in\mn{0}{\kappa}\), the matrices \(\Lu{n+1}\) and \(\Ru{n+1}\) are both \tnnH{} (see, \eg{},~\cite[\crem{3.4.1}, p.~122]{MR1152328}).

\begin{theorem}\label{D3.4.1}
 Let \(n\in\NO \) and let \(\Csn \) be a sequence of complex \tqqa{matrices}. Then \(\Tcn \ne \emptyset\) if and only if the sequence \(\Csn \) is \tTnnd{}. In this case, \(\Tcn  =\gK (\Mu{n+1};\sqrt{\Lu{n+1}}, \sqrt{\Ru{n+1}}).\)
\end{theorem}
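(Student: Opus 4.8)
The plan is to reformulate the membership $C_{n+1}\in\Tcn$ as a positivity condition on a single $2\times2$ block matrix assembled from $\Mu{n+1}$, $\Lu{n+1}$, and $\Ru{n+1}$, by passing to the Schur complement of $\Tu{n+1}$ with respect to its ``central'' block $\Tu{n-1}$, and then to recognise the matrix ball from the classical description of \tnnH{} $2\times2$ block matrices with prescribed diagonal (Smul$'$jan's lemma; see~\cite{MR1073857} and~\cite[Section~1.5]{MR1152328}). By the remark following the definition of \tTnnd{} sequences, $C_{n+1}\in\Tcn$ is equivalent to $\Tu{n+1}$ being \tnnH{}; since $\Tu{m}$ is a block principal submatrix of $\Tu{n+1}$ for each $m\in\mn{0}{n}$, this forces $\Tu{m}$ to be \tnnH{} for all such $m$, so that $\Tcn\neq\emptyset$ already implies that $\Csn$ is \tTnnd{}. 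As the matrix ball $\gK(M;A,B)$ always contains its centre $M$, it therefore suffices to prove the identity $\Tcn=\gK(\Mu{n+1};\sqrt{\Lu{n+1}},\sqrt{\Ru{n+1}})$ under the hypothesis that $\Csn$ is \tTnnd{}, i.e.\ that $\Tu{m}$ is \tnnH{} for each $m\in\mn{0}{n}$. For $n=0$ the claim is immediate, since $\Tu{1}=\bigl[\begin{smallmatrix} C_0 & C_1^\ad \\ C_1 & C_0 \end{smallmatrix}\bigr]$ while $\Mu{1}=\Oqq$ and $\Lu{1}=\Ru{1}=C_0$, so that the $2\times2$ argument below applies to $\Tu{1}$ directly; I assume $n\geq1$ from now on.

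The decisive step is the block partition
\[
 \Tu{n+1}
 =\begin{bmatrix} C_0 & \Yu{n}^\ad & C_{n+1}^\ad \\ \Yu{n} & \Tu{n-1} & \Zu{n}^\ad \\ C_{n+1} & \Zu{n} & C_0 \end{bmatrix},
\]
whose diagonal blocks have sizes $q$, $nq$, $q$ and which follows from $C_{-j}=C_j^\ad$ by a routine reindexing, using~\eqref{NGL}. Applying the standard Schur complement criterion to the two $2\times2$ block partitions of $\Tu{n}$ with $\Tu{n-1}$ placed in a corner, the hypothesis that $\Tu{n}$ is \tnnH{} furnishes the range inclusions $\ran{\Yu{n}}\subseteq\ran{\Tu{n-1}}$ and $\ran{\Zu{n}^\ad}\subseteq\ran{\Tu{n-1}}$, and also shows, since $\Ru{n+1}$ and $\Lu{n+1}$ are the corresponding Schur complements of $\Tu{n-1}$, that $\Ru{n+1}$ and $\Lu{n+1}$ are \tnnH{} (cf.~\cite[\crem{3.4.1}]{MR1152328}), so that the Hermitian square roots $\sqrt{\Lu{n+1}}$ and $\sqrt{\Ru{n+1}}$ exist. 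Hence, by the Schur complement criterion for \tnnH{} block matrices (in its form valid for possibly singular matrices, using the Moore-Penrose inverse) and because $\Tu{n-1}$ is \tnnH{} with the above range inclusions, $\Tu{n+1}$ is \tnnH{} if and only if the Schur complement of $\Tu{n-1}$ in $\Tu{n+1}$ is \tnnH{}; and a direct computation, using that $\Tu{n-1}^\dagger$ is Hermitian together with the definitions in~\eqref{MLR} (in particular $\Yu{n}^\ad\Tu{n-1}^\dagger\Zu{n}^\ad=(\Zu{n}\Tu{n-1}^\dagger\Yu{n})^\ad=\Mu{n+1}^\ad$), evaluates that Schur complement to
\[
 S = \begin{bmatrix} \Ru{n+1} & (C_{n+1}-\Mu{n+1})^\ad \\ C_{n+1}-\Mu{n+1} & \Lu{n+1} \end{bmatrix}.
\]

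It then remains to invoke Smul$'$jan's lemma: since $\Lu{n+1}$ and $\Ru{n+1}$ are \tnnH{}, the matrix $S$ is \tnnH{} if and only if there is a contractive $K\in\Cqq$ with $C_{n+1}-\Mu{n+1}=\sqrt{\Lu{n+1}}\,K\,\sqrt{\Ru{n+1}}$, that is, if and only if $C_{n+1}\in\gK(\Mu{n+1};\sqrt{\Lu{n+1}},\sqrt{\Ru{n+1}})$. Chaining this with the preceding equivalence yields $C_{n+1}\in\Tcn$ if and only if $C_{n+1}\in\gK(\Mu{n+1};\sqrt{\Lu{n+1}},\sqrt{\Ru{n+1}})$, which is the asserted identity; non-emptiness has already been observed. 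I expect the one genuinely delicate point to be the Schur complement reduction in the possibly degenerate situation: one has to verify that the Moore-Penrose based Schur complement formula is legitimate and that $\Tu{n+1}$ being \tnnH{} is truly equivalent to $S$ being \tnnH{}, which is precisely where the range inclusions $\ran{\Yu{n}}\subseteq\ran{\Tu{n-1}}$ and $\ran{\Zu{n}^\ad}\subseteq\ran{\Tu{n-1}}$ (granted for free by $\Tu{n}$ being \tnnH{}) enter, and Smul$'$jan's lemma must likewise be used in its form permitting singular $\Lu{n+1}$ and $\Ru{n+1}$, in which case the contraction $K$ need not be unique. The remaining work, namely the index bookkeeping that identifies the blocks of $S$ with $\Mu{n+1}$, $\Lu{n+1}$, and $\Ru{n+1}$, is routine.
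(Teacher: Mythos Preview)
The paper does not actually prove \rthm{D3.4.1}; it only cites~\cite[Part~I, \cthm{1}]{MR885621I_III_V} and~\cite[\cthmss{3.4.1}{3.4.2}]{MR1152328}. Your argument---partitioning $\Tu{n+1}$ with the central block $\Tu{n-1}$, reducing via the (generalized) Schur complement to the $2\times2$ block matrix $S$, and then invoking Smul$'$jan's lemma---is correct and is in fact the standard route; it is essentially the proof given in the references the paper cites. Your block identification of $\Tu{n+1}$ is accurate, the range inclusions $\ran{\Yu{n}}\subseteq\ran{\Tu{n-1}}$ and $\ran{\Zu{n}^\ad}\subseteq\ran{\Tu{n-1}}$ do follow from the two $2\times2$ partitions of $\Tu{n}$ as you indicate, and the computation of $S$ is right. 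You also correctly flag the only genuinely delicate points (the Moore--Penrose Schur complement equivalence in the degenerate case, and the singular form of Smul$'$jan's lemma), and you handle the $n=0$ base case appropriately by applying Smul$'$jan's lemma directly to $\Tu{1}$.
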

 A proof of \rthm{D3.4.1} is given in~\cite[Part~I, \cthm{1}]{MR885621I_III_V}, (see also ~\cite[\cthmss{3.4.1}{3.4.2}, pp.~122/123]{MR1152328}).

 Observe that the parameters \(\Mu{n+1}\), \(\Lu{n+1}\), and \(\Ru{n+1}\) of the matrix ball stated in \rthm{D3.4.1} admit a stochastic interpretation (see~\cite[Part~I]{MR885621I_III_V}).

\bleml{L1644}
 Let \(n\in\N\) and let \(\mu\in\MggqTcn\), where \(\Csn \) is a \tTnnd{} sequence of complex \tqqa{matrices}. If \(\rank\Tu{n}\leq n\), then there exists a subset \(\cN\) of \(\T\) with at most \(nq\) elements such that \(\mu(\T\setminus\cN)=\Oqq\).
\elem
\bproof
 Let \(\mu=\matauo{\mu_{jk}}{j,k=1}{q}\) and denote by \(\stb{1},\stb{2},\dotsc,\stb{q}\) the canonical basis of \(\Cq\). We consider an arbitrary \(\ell\in\mn{1}{q}\). Then \(\Tu{n}^{(\ell)}\defeq\matauo{\Fc{\mu_{\ell\ell}}{j-k}}{j,k=0}{n}\) admits the representation
 \[
  \Tu{n}^{(\ell)}
  =\ek*{\diag_{n+1}(\stb{\ell})}^\ad\Tu{n}\ek*{\diag_{n+1}(\stb{\ell})}
 \]
 with the block diagonal matrix \(\diag_{n+1}(\stb{\ell})\in\Coo{(n+1)q}{(n+1)}\) with diagonal blocks \(\stb{\ell}\). Consequently,
 \[
  \rank\Tu{n}^{(\ell)}
  \leq\rank\Tu{n}
  \leq n.
 \]
 Hence, there exists a vector \(v^{(\ell)}\in\Co{n+1}\setminus\set{\Ouu{(n+1)}{1}}\) and \(\Tu{n}^{(\ell)}v^{(\ell)}=\Ouu{(n+1)}{1}\). With \(v^{(\ell)}=\col\seq{v^{(\ell)}_j}{j}{0}{n}\), then
 \[
  0
  =\rk{v^{(\ell)}}^\ad\Tu{n}^{(\ell)}v^{(\ell)}
  =\int_\T\abs*{\sum_{j=0}^nv^{(\ell)}_j\zeta^j}^2\mu_{\ell\ell}(\dif \zeta)
 \]
 follows. Since \(\ell\in\mn{1}{q}\) was arbitrarily chosen, we obtain \(\tr\mu(\T\setminus\cN)=\OM\), where \(\cN\) consists of all  modulus \(1\) roots of the polynomial \(\prod_{\ell=1}^q\sum_{j=0}^nv^{(\ell)}_j\zeta^j\), which is of degree at most \(nq\). Thus, by observing that \(\mu\) is absolutely continuous with respect to \(\tr\mu\), the proof is complete.
\eproof

\section{Central \tnnH{} measures}\label{S1031}
 In this section, we study so-called central \tnnH{} measures.

 Let \(\kappa\in\Ninf\) and let \(\Cska\) be a sequence of complex \tqqa{matrices}. If \(k\in\mn{1}{\kappa}\) is such that \(C_j=\Mu{j}\) for all \(j\in\mn{k}{\kappa}\), where \(\Mu{j}\) is given by \eqref{MLR1} and \eqref{MLR}, then \(\Cska\) is called \notion{\tco{k}}{central!sequence!of order $k$}. If in the case \(\kappa\geq2\) the sequence \(\Cska\) is additionally not \tco{k-1}, then \(\Cska\) is called \notion{\tcmo{k}}{central!sequence!of minimal order $k$}. If there exists a number \(\ell\in\mn{1}{\kappa}\) such that \(\Cska\) is \tco{\ell}, then \(\Cska\) is simply called \notion{\tc{}}{central!sequence}.

 Let \(n\in\NO\) and let \(\Csn\) be a sequence of complex \tqqa{matrices}. Let the sequence \((C_j)_{j=n+1}^\infty\) be recursively defined by \(C_j\defeq\Mu{j}\), where \(\Mu{j}\) is given by \eqref{MLR}. Then \(\Csinf\) is called the \notion{\tcsc{\(\Csn\)}}{central!sequence!corresponding to $\Csn$}.

\begin{remark}\label{NR1}
 Let  \(n\in\NO \) and let \(\Csn \) be a \tTnnd{} sequence of complex \tqqa{matrices}. According to \rthm{D3.4.1}, then the \tcsc{\(\Csn\)} is \tTnnd{} as well.
\end{remark}

Observe that the elements of central \tTnnd{} sequences fulfill special recursion formulas (see~\cite[Part~V, \cthm{32}, p.~303]{MR885621I_III_V} or~\cite[\cthm{3.4.3}, p.~124]{MR1152328}). Furthermore, if \(n\in\NO \) and if  \(\Csn \) is a \tTpd{} sequence of complex \tqqa{matrices}, then the \tcsc{\(\Csn \)} is \tTpd{} (see~\cite[\cthm{3.4.1(b)}]{MR1152328}).

 A \tnnH{} measure \(\mu\) belonging to \(\MggqT\) is said to be \notion{\tc}{central!measure} if \((\Fcm{j})_{j=0}^\infty\) is \tc{}. If \(k\in\N\) is such that \((\Fcm{j})_{j=0}^\infty\) is \tc{} of (minimal) order \(k\), then \(\mu\) is called \notion{\tc{} of (minimal) order \(k\)}{central!measure!of order $k$}\index{central!measure!of minimal order $k$}.
 
\begin{remark}\label{ZM}
 Let \(n\in\NO \), let  \(\Csn \) be a \tTnnd{} sequence of complex \tqqa{matrices} and let  \(\Csinf \) be the \tcsc{\(\Csn \)}. According to \rthm{D2.2.1'}, there is a unique \tnnH{} measure \(\mu\) belonging to \(\MggqT\) such that its Fourier coefficients fulfill \(\Fcm{j} = C_j\) for each \(j\in\NO \). This \tnnH{} \tqqa{measure} \(\mu\) is called the \notion{\tcmc{\(\Csn \)}}{central!measure!for $\Csn$}.
\end{remark}

\bpropl{P0958}
 Let \(n\in\N\) and let \(\Csn \) be a \tTnnd{} sequence of complex \tqqa{matrices}. Suppose \(\rank\Tu{n}=\rank\Tu{n-1}\). Then there exists a finite subset \(\cN\) of \(\T\) such that the central measure \(\muc\) corresponding to  \(\Csn \) fulfills \(\mu(\T\setminus\cN)=\Oqq\).
\eprop
\bproof
 We have \(\muc\in\MggqTcinf\) where \(\Csinf \) is the central \tTnnd{} sequence corresponding to  \(\Csn \). According to~\zitaa{MR3014198}{\cprop{2.26}}, we get \(\Lu{\ell+1}=\OM\) for all \(\ell\in\minf{n}\). In view of~\zitaa{MR3014198}{\clem{2.25}}, then \(\rank\Tu{\ell}=\rank\Tu{n-1}\) follows for all \(\ell\in\minf{n}\). In particular, \(\rank\Tu{nq}=\rank\Tu{n-1}\leq nq\). Since \(\muc\) belongs to \(\MggqTc{nq}\), the application of \rlem{L1644} completes the proof.
\eproof

 If \(n\in\N\) and if \(\Csn \) is a \tTpd{} sequence of complex \tqqa{matrices}, then the \tcmc{\(\Csn \)} is the unique measure in \(\MggqTcn \) with maximal entropy (see~\cite[Part~II, \cthm{10}]{MR885621I_III_V}).

\begin{remark}\label{R6-11R}
 Let \(\Csinf \) be a \tTnnd{} sequence which is a \tco{0}. Then it is readily checked that \(C_k = \Oqq\) for each \(k\in\N\) and that the \tc{} measure \(\mu\) corresponding to \(\Cs{0}\) admits the representation \(\mu = \frac{1}{2\pi} C_0\lebc \), where \(\lebc \) is the linear Lebesgue measure defined on \(\BsaT  \).
\end{remark}

 Now we describe the \tcmc{a finite \tTpd{} sequence of complex \tqqa{matrices}}.
\begin{theorem}\label{FK16-N}
 Let  \(n\in\NO \) and let \(\Csn \) be a \tTpd{} sequence of complex \tqqa{matrices}.  Let \(\Tu{n}^\inv  =\matauo{\tau_{jk}^{[n]}}{j,k=0}{n} \) be the \tqqa{block} representation of \(\Tu{n}^\inv \), and let the matrix polynomials \(A_n \colon \C\to\Cqq \) and \(B_n \colon\C\to\Cqq \) be given by 
\begin{align}\label{FK16-N.V1}
A_n (z)&\defeq  \sum_{j=0}^n \tau_{j0}^{[n]} z^j&
\tand{}
B_n (z)&\defeq  \sum_{j=0}^n \tau_{n,n-j}^{[n]} z^j.
\end{align}
Then \(\det A_n (z) \ne 0 \) and \(\det B_n (z)\ne 0\) hold true for each \(z\in\D \cup\T \) and the central measure \(\mu\) for \(\Csn \) admits the representations
\begin{equation}\label{NN1}
\mu (B) =\frac{1}{2\pi} \int_B [A_n (\zeta)]^{-\ast} A_n (0) [A_n (\zeta)]^\inv  \lebca{\dif\zeta}
\end{equation}
and 
\begin{equation}\label{NN2}
\mu (B) =\frac{1}{2\pi} \int_B [B_n (\zeta)]^\inv  B_n (0) [B_n (\zeta)]^{-\ast} \lebca{\dif\zeta}
\end{equation}
for each \(B\in\BsaT  \), where \(\lebc \) is the linear Lebesgue measure defined on \(\BsaT  \).
\end{theorem}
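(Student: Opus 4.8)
The plan is to realize \(\mu\) as the \tRHm{} of a rational \tqqa{\Car{}} function which, thanks to positive definiteness of \(\Tu{n}\), has no pole on \(\D\cup\T\); then \rthm{M3.2}, applied with empty exceptional set \(\cN\), exhibits \(\mu\) as absolutely continuous with density the real part of that function on \(\T\), and it remains to rewrite this density in the two forms \eqref{NN1} and \eqref{NN2}.

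First I would extract the relevant facts about \(A_n\) and \(B_n\) from the block identities for \(\Tu{n}^\inv\). Since \(\Csn\) is \tTpd{}, the matrix \(\Tu{n}\) is \tpH{}, hence invertible, and by \rthm{D2.2.1'} the sequence \(\Csn\) has a unique central measure \(\mu\in\MggqTcn\). Pairing the rows of \(\Tu{n}\) with the first block column of \(\Tu{n}^\inv\) yields \(\sum_{k=0}^nC_{j-k}\tau_{k0}^{[n]}=\delta_{j0}\Iq\) for \(j\in\mn{0}{n}\), and since \(C_{j-k}=\int_\T\zeta^{k-j}\mu(\dif\zeta)\) this reads \(\int_\T\zeta^{-j}\mu(\dif\zeta)A_n(\zeta)=\delta_{j0}\Iq\) for \(j\in\mn{0}{n}\); symmetrically, pairing the last block row of \(\Tu{n}^\inv\) with the columns of \(\Tu{n}\) gives \(\int_\T\zeta^{-j}B_n(\zeta)\mu(\dif\zeta)=\delta_{j0}\Iq\) for \(j\in\mn{0}{n}\). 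In particular \(\int_\T\mu(\dif\zeta)A_n(\zeta)=\Iq\), and \(A_n\) is orthogonal to \(\zeta^{-1},\dotsc,\zeta^{-n}\) with respect to \(\mu\) (mutatis mutandis for \(B_n\)). Next I would invoke the matrix Szeg\H{o} theory for \tpH{} block \Toe{} matrices (see, \eg{}, \cite[\csec{3.6}]{MR1152328}): the polynomial built from the first block column of the inverse, and the one built from the reversed last block row, have determinants all of whose zeros lie strictly outside \(\D\cup\T\); alternatively this follows by identifying \(A_n(z)=\sum_j\tau_{j0}^{[n]}z^j\) with the value at the second argument \(0\) of the reproducing kernel of \(\Tu{n}^\inv\) and using its strict positivity together with a Christoffel--Darboux evaluation. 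This proves the first assertion, \(\det A_n(z)\ne0\) and \(\det B_n(z)\ne0\) for all \(z\in\D\cup\T\).

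Next I would bring in the rational \tqqa{\Car{}} function \(\Phi\) determined by \(\mu\) via \rthm{D2.2.2}. By \cite{MR2104258}, \(\Phi\) is rational and admits a representation \(\Phi=PA_n^\inv|_\D\) with a suitable \tqqa{matrix} polynomial \(P\) (and, dually, \(\Phi=B_n^\inv R|_\D\) with a suitable \(R\)). Put \(F\defeq PA_n^\inv\); since \(\det A_n\) has all its zeros outside \(\D\cup\T\), \(F\) is holomorphic on a region containing \(\cdisk{r}\) for some \(r\in(1,+\infty)\) and restricts to \(\Phi\) on \(\D\). Hence the set \(\cN\defeq\setaa{u\in\T}{\det A_n(u)=0}\) is empty, and \rthm{M3.2} (whose proof rests on \rlem{M1.2}) gives \(\Delta\equiv\Oqq\), \(\Theta=F\), and \(\mu(B)=\frac1{2\pi}\int_B\re F(\zeta)\lebca{\dif\zeta}\) for all \(B\in\BsaT\). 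For \(\zeta\in\T\) one has \(\re F(\zeta)=\frac12[A_n(\zeta)]^{-\ast}\rk{[A_n(\zeta)]^\ast P(\zeta)+[P(\zeta)]^\ast A_n(\zeta)}[A_n(\zeta)]^\inv\), and using the relations between \(P\) and \(A_n\) encoded in the block structure of \(\Tu{n}\) one checks that the middle factor equals \(2A_n(0)\); this yields \eqref{NN1}. To obtain \eqref{NN2} I would apply \eqref{NN1} to the transposed sequence \((C_j^\tra)_{j=0}^n\), which is again \tTpd{} with central measure obtained from \(\mu\) by entrywise transposition, and then translate the \(A_n\)\nobreakdash-representation of the transposed problem into the \(B_n\)\nobreakdash-representation of the original, in the manner in which \rprop{P1643} is deduced from \rprop{P1637}.

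The main obstacle is the last step: producing the precise quotient \(\Phi=PA_n^\inv\) and verifying that \([A_n(\zeta)]^\ast P(\zeta)+[P(\zeta)]^\ast A_n(\zeta)=2A_n(0)\) for \(\zeta\in\T\), so that \(\re F\) collapses to \([A_n(\zeta)]^{-\ast}A_n(0)[A_n(\zeta)]^\inv\); here the orthogonality of \(A_n\) against \(\zeta^{-1},\dotsc,\zeta^{-n}\) with respect to \(\mu\) and the normalization \(\int_\T\mu(\dif\zeta)A_n(\zeta)=\Iq\) enter decisively (this identity is the matrix analogue of the classical Bezoutian relation between orthogonal polynomials of the first and second kind). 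The zero-free property of \(\det A_n\) and \(\det B_n\) on \(\D\cup\T\), although classical, is equally indispensable and is precisely where positive --- rather than merely non-negative --- definiteness of \(\Tu{n}\) is used.
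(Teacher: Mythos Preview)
The paper does not actually prove \rthm{FK16-N}; it merely records it as a known result and cites external references: the zero-free property of \(\det A_n\) and \(\det B_n\) on \(\D\cup\T\) is referred to Ellis/Gohberg, Delsarte/Genin/Kamp, and~\cite[\cprop{3.6.3}]{MR1152328}, while the integral representations \eqref{NN1}--\eqref{NN2} are attributed to~\cite[Part~III, \cthm{16}, \crem{18}]{MR885621I_III_V}. So there is no in-text proof to compare against.

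Your proposal is therefore a genuinely different route: you try to deduce \rthm{FK16-N} from the machinery developed \emph{in this paper}, namely \rthm{M3.2}. The outline is sound, but the piece you call ``the main obstacle'' is left unresolved, and the link between \(A_n\) and the polynomial \(\bn\) of \rthm{FK1.2} is not made explicit. That link is in fact elementary: writing \(\Tu{n}=\bigl[\begin{smallmatrix}C_0&\Yu{n}^\ad\\ \Yu{n}&\Tu{n-1}\end{smallmatrix}\bigr]\) and applying the Schur-complement inversion formula gives \(\tau_{00}^{[n]}=\Ru{n+1}^\inv\) and \(\col(\tau_{j0}^{[n]})_{j=1}^{n}=-\Tu{n-1}^\inv\Yu{n}\Ru{n+1}^\inv\), whence \(A_n(z)=\bn(z)\,A_n(0)\) with \(A_n(0)=\Ru{n+1}^\inv\) \tpH{}. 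From \rthm{FK1.2} one then gets \(\Phi=\an\bn^\inv=\an A_n(0)A_n^\inv\), so your \(P\) is \(\an A_n(0)\), and since \(\det A_n\) has no zeros on \(\D\cup\T\), \rthm{M3.2} applies with \(\cN=\emptyset\). The remaining identity \([A_n(\zeta)]^\ad P(\zeta)+[P(\zeta)]^\ad A_n(\zeta)=2A_n(0)\) for \(\zeta\in\T\) is then equivalent to \(\bn(\zeta)^\ad\an(\zeta)+\an(\zeta)^\ad\bn(\zeta)=2\Ru{n+1}\), a Christoffel--Darboux/Bezoutian relation for the pair \((\an,\bn)\) which is part of the theory in~\cite{MR2104258} (and also in~\cite[\csec{3.6}]{MR1152328}); without supplying that identity your argument remains a sketch. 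The passage from \eqref{NN1} to \eqref{NN2} by transposition is fine.
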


 The fact that \(\det A_n(z)\neq0\) or \(\det B_n(z)\neq0\) for \(z\in\D\cup\T\) can be proved in various ways (see \eg{}\ Ellis/Gohberg~\zitaa{MR1942683}{\csec{4.4}} or Delsarte/Genin/Kamp~\zitaa{MR0481886}{\cthm{6}}, and~\zitaa{MR1152328}{\cprop{3.6.3}, p.~336}, where the connection to the truncated matricial trigonometric moment problem is used.

 The representations \eqref{NN1} and \eqref{NN2} are proved in~\cite[Part~III, \cthm{16}, \crem{18}, pp.~332/333]{MR885621I_III_V}.

 The measure given via \eqref{NN1} was studied in a different framework by Delsarte/Genin/Kamp~\zita{MR0481886}. These authors considered a \tnnH{} measure \(\mu\in\MggqT\) with \tTpd{} sequence \((\Fcm{j})_{j=0}^\infty\) of Fourier coefficients. Then it was shown in~\zitaa{MR0481886}{\cthm{9}} that, for each \(n\in\NO\), the measure constructed via \eqref{NN1} from the \tTpd{} sequence \((\Fcm{j})_{j=0}^n\) is a solution of the truncated trigonometric moment problem associated with the sequence \((\Fcm{j})_{j=0}^n\). The main topic of~\zita{MR0481886} is to study left and right orthonormal systems of \tqqa{matrix} polynomials associated with the measure \(\mu\). It is shown in~\zita{MR0481886} that these polynomials are intimately connected with the polynomials \(A_n\) and \(B_n\) which were defined in \rthm{FK16-N}.
 
\bpropl{P1207}
 Let \(P\) be a complex \tqqa{matrix} polynomial of degree \(n\) such that \(P(0)\) is \tpH{} and \(\det P(z)\neq0\) for all \(z\in\D\cup\T\). Let \(g\colon\T\to\Cqq\) be defined by \(g(\zeta)\defeq\ek{P(\zeta)}^\invad\ek{P(0)}\ek{P(\zeta)}^\inv\). Then \(\mu\colon\BsaT\to\Cqq\) defined by
 \(
  \mu(B)
  \defeq\frac{1}{2\pi}\int_Bg(\zeta)\lebca{\dif\zeta}
 \)
 belongs to \(\MggqT\) and is central of order \(n+1\).
\eprop
\bproof
 Obviously, \(\mu\) belongs to \(\MggqT\). Let \(\Cbinf\) be the \tFcc{\(\mu\)}. According to~\cite[\clem{2}]{MR1200154}, then \(\Tu{n}\) is \tpH{}, \ie{}\ the sequence \(\Csn\) is \tTpd{}, and \(P\) coincides with the matrix polynomial \(A_n\) given in \eqref{FK16-N.V1}. In view of \rthm{FK16-N}, thus \(\mu\) is the \tcmc{\(\Csn\)}. In particular, \(\Csinf\) is the \tcsc{\(\Csn\)} and  therefore \(\Csinf\) is \tco{n+1}. Hence, \(\mu\) is \tco{n+1}.
\eproof

Using~\cite[\clem{3}]{MR1200154} instead of~\cite[\clem{2}]{MR1200154}, one can analogously prove the following dual result:
\bpropl{P1242}
 Let \(Q\) be a complex \tqqa{matrix} polynomial of degree \(n\) such that \(Q(0)\) is \tpH{} and \(\det Q(z)\neq0\) for all \(z\in\D\cup\T\). Let \(h\colon\T\to\Cqq\) be defined by \(h(\zeta)\defeq\ek{Q(\zeta)}^\inv\ek{Q(0)}\ek{Q(\zeta)}^\invad\). Then \(\mu\colon\BsaT\to\Cqq\) defined by
 \(
  \mu(B)
  \defeq\frac{1}{2\pi}\int_Bh(\zeta)\lebca{\dif\zeta}
 \)
 belongs to \(\MggqT\) and is central of order \(n+1\).
\eprop

\bpropl{P1216}
 Let \(n\in\NO\) and let \(\mu\in\MggqT\) be central of order \(n+1\) with Fourier coefficients \(\Cbinf\) such that the sequence \(\Csn\) is \tTpd{}. Then the matrix polynomials \(A_n\) and \(B_n\) given by \eqref{FK16-N.V1} fulfill \(\det A_n(z)\neq0\) and \(\det B_n(z)\neq0\) for all \(z\in\D\cup\T\), and \(\mu\) admits the representations \eqref{NN1} and \eqref{NN2} for all \(B\in\BsaT\).
\eprop
\bproof
 Since \(\mu\) is \tco{n+1}, the sequence \(\Csinf\) is \tco{n+1}. In particular, \(\Csinf\) is the \tcsc{\(\Csn\)}. Hence, \(\mu\) is the \tcmc{\(\Csn\)}. The application of \rthm{FK16-N} completes the proof.
\eproof

 In the general situation of an arbitrarily given \tTnnd{} sequence \(\Csn \) of complex \tqqa{matrices}, the \tcmc{\(\Csn  \)} can also be represented in a closed form. To do this, we will use the results on matrix-valued \Car{} functions defined on the open unit disk \(\D \) which were obtained in \rsec{S1024}.

\section{Central matrix-valued \Car{} functions}\label{S1019}
 In this section, we recall an explicit representation of the \tRHm{} of an arbitrary central matrix-valued \Car{} function.

\begin{remark}\label{NCC}
 Let \(\Csinf \) be a \tTnnd{} sequence of complex \tqqa{matrices} and let \(\Gsinf \) be given by
 \begin{align}\label{GC}
  \Gamma_0&\defeq  C_0&
  \tand{}
  \Gamma_j&\defeq  2C_j
 \end{align}
 for each \(j\in\N\). Furthermore, let \(\mu\in\MggqT\). In view of \(\Gamma_0^\ad = \Gamma_0\), \rthmss{D2.2.1'}{D2.2.2} show then that \(\mu\) belongs to \(\MggqTcinf\) if and only if \(\mu\) is the \tRHm{} of the \tqqa{\Car{}} function \(\Phi \colon\D \to\Cqq \) defined by 
\begin{equation}\label{DC}
\Phi (z)
\defeq  \int_\T  \frac{\zeta + z}{\zeta - z} \mu (\dif\zeta).
\end{equation}
\end{remark}

 The well-studied matricial version of the classical \Car{} interpolation problem consists of the following:
\begin{description}
 \item[\CIP{}:] Let \(\kappa\in\NOinf \) and let \(\Gska \) be a sequence of complex \tqqa{matrices}. Describe the set \(\CqDGka \)\index{c@$\CqDGka $} of all \(\Phi \in\CqD \) such that \(\frac{1}{j!} \Phi^{(j)} (0) = \Gamma_j\) holds true for each \(j\in\mn{0}{\kappa}\).\index{\CIP{}}
\end{description}

 In order to formulate a criterion for the solvability of \tpCIP{}, we recall the notion of a \Car{} sequence. If \(\kappa \in\NOinf \), then a sequence \(\Gska \) is called a \notion{\tqqCs{}}{\Car{}!sequence} if, for each \(n\in\mn{0}{\kappa}\), the matrix \(\re  \Sn \) is \tnnH{}, where \(\Sn \) is given by\index{s@$\Sn$}
\begin{equation}\label{N11}
 \Sn 
 \defeq
 \begin{bmatrix}
\Gamma_0 & 0                  & \hdots & 0 & 0\\
\Gamma_1 & \Gamma_0  & \hdots & 0 & 0\\
\vdots          & \vdots            &          &\vdots & \vdots\\
\Gamma_{n-1} &\Gamma_{n-2}  & \hdots & \Gamma_0 & 0\\
\Gamma_{n} &\Gamma_{n-1}  & \hdots & \Gamma_1 & \Gamma_0\\
\end{bmatrix}.
\end{equation}

\begin{theorem}\label{NLB}
 Let  \(\kappa\in\NOinf \) and let \(\Gska \) be a sequence of complex \tqqa{matrices}.  Then \(\CqDGka  \ne\emptyset \) if and only if \(\Gska \) is a \tqqCs{}.
\end{theorem}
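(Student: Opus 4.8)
The plan is to pass from the interpolation problem to the (truncated) matricial trigonometric moment problem via the Riesz--Herglotz correspondence of \rthm{D2.2.2}, and then to recognise $\re\Sn$ as the block \Toe{} matrix $\Tu{n}$ associated with a suitably rescaled sequence. To set this up, put $C_0\defeq\re\Gamma_0$, put $C_j\defeq\frac{1}{2}\Gamma_j$ for $j\in\mn{1}{\kappa}$, and set $C_{-j}\defeq C_j^\ad$ for $j\in\mn{1}{\kappa}$. I would first verify the equivalence: $\CqDGka\neq\emptyset$ if and only if there exists a $\mu\in\MggqT$ with $\Fcm{j}=C_j$ for all $j\in\mn{0}{\kappa}$. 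Indeed, if $\Phi\in\CqDGka$, then by \rthmp{D2.2.2}{D2.2.2.a} its \tRHm{} $\mu$ satisfies $\Phi(z)-\iu\im\Phi(0)=\Fcm{0}+2\sum_{j=1}^\infty\Fcm{j}z^j$ on $\D$; comparing Taylor coefficients at $0$ gives $\Gamma_0=\Fcm{0}+\iu\im\Phi(0)$ and $\Gamma_j=2\Fcm{j}$ for $j\geq1$, whence $\Fcm{0}=\re\Gamma_0=C_0$ (recall $\Fcm{0}=\mu(\T)$ is \tH{}) and $\Fcm{j}=C_j$ for $j\in\mn{1}{\kappa}$. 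Conversely, given such a $\mu$, \rthmp{D2.2.2}{D2.2.2.b} applied with $H\defeq\im\Gamma_0$ produces a $\Phi\in\CqD$ with $\Phi(0)=\mu(\T)+\iu\im\Gamma_0=\Gamma_0$ and $\frac{1}{j!}\Phi^{(j)}(0)=2\Fcm{j}=\Gamma_j$ for $j\geq1$, i.e.\ $\Phi\in\CqDGka$.

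Next I would invoke the solvability criteria for the moment problem: by \rthm{D2.2.1'} (in the case $\kappa=\infty$) and by \rthm{D3.4.2} (in the case $\kappa\in\NO$), a measure $\mu\in\MggqT$ with $\Fcm{j}=C_j$ for all $j\in\mn{0}{\kappa}$ exists if and only if the sequence $\Cska$ is \tTnnd{}, that is, if and only if $\Tu{n}=\ek{C_{j-k}}_{j,k=0}^n$ is \tnnH{} for every $n\in\mn{0}{\kappa}$.

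It then remains to identify these matrices with the matrices $\re\Sn$ from the definition of a \tqqCs{}, and this is the only computation involved: for each $n\in\mn{0}{\kappa}$ one has $\re\Sn=\Tu{n}$. Writing out the $(j,k)$-block of $\Sn+\Sn^\ad$ from \eqref{N11}, it equals $\Gamma_{j-k}$ for $j>k$, equals $\Gamma_0+\Gamma_0^\ad$ for $j=k$, and equals $\Gamma_{k-j}^\ad$ for $j<k$; since $C_0=\re\Gamma_0$ and $C_j=\frac{1}{2}\Gamma_j$ for $j\geq1$ (so that $C_{j-k}=(C_{k-j})^\ad=\frac{1}{2}\Gamma_{k-j}^\ad$ whenever $j<k$), each of these equals $2C_{j-k}$, hence $\Sn+\Sn^\ad=2\Tu{n}$ and therefore $\re\Sn=\Tu{n}$. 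Consequently $\Cska$ is \tTnnd{} if and only if $\re\Sn$ is \tnnH{} for every $n\in\mn{0}{\kappa}$, which is exactly the statement that $\Gska$ is a \tqqCs{}. Chaining the three equivalences yields the theorem. I do not expect a genuine obstacle here; the only point requiring care is keeping the factor $\frac{1}{2}$ and the splitting $\Gamma_0=\re\Gamma_0+\iu\im\Gamma_0$ consistent throughout, so that the identification $\re\Sn=\Tu{n}$ comes out as an exact equality rather than only up to a positive scalar.
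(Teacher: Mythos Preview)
Your argument is correct and is essentially the proof the paper points to: the paper does not spell out a proof of \rthm{NLB} but merely records that the case $\kappa=\infty$ follows from \rthmss{D2.2.2}{D2.2.1'} and refers to the literature for finite $\kappa$, while your write-up carries this out in detail, using \rthm{D3.4.2} for the finite case and proving the identity $\re\Sn=\Tu{n}$, which is exactly the content of \rrem{R1331}. There is nothing to correct.
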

 In the case \(\kappa =\infty\), \rthm{NLB} is a consequence of \rthmss{D2.2.2}{D2.2.1'}. In the case \(\kappa\in\NO \), a proof of \rthm{NLB} can be found, \eg{}, in~\cite[Part~I, \csec{4}]{MR885621I_III_V}.
 
\begin{cor}\label{C1343}
 Let \(\Gsinf\) be a sequence of complex \tqqa{matrices}. Then \(\Phi\colon\D\to\Cqq\) defined by
 \begin{equation}\label{Tsr}
  \Phi(z)
  =\sum_{j=0}^\infty z^j\Gamma_j
 \end{equation}
 belongs to \(\CqD\) if and only if \(\Gsinf\) is a \tqqCs{}.
\end{cor}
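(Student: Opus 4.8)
The plan is to deduce \rcor{C1343} directly from \rthm{NLB} applied in the case \(\kappa=\infty\), together with the Taylor expansion of holomorphic functions on \(\D\). First I would observe that the power series \(\sum_{j=0}^\infty z^j\Gamma_j\) need not a priori converge on all of \(\D\), so the statement implicitly requires that this series defines a function on \(\D\) at all; hence the natural reading is: if the series converges on \(\D\) and its sum \(\Phi\) belongs to \(\CqD\), then \(\Gsinf\) is a \tqqCs{}, and conversely if \(\Gsinf\) is a \tqqCs{} then the series converges on \(\D\) and its sum belongs to \(\CqD\). The backward direction is the substantive one and is where \rthm{NLB} enters.

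For the forward direction, suppose \(\Phi\) as in \eqref{Tsr} belongs to \(\CqD\). Since \(\Phi\) is then holomorphic on \(\D\), its Taylor coefficients at \(0\) are uniquely determined, and comparing \eqref{Tsr} with the Taylor expansion gives \(\frac{1}{j!}\Phi^{(j)}(0)=\Gamma_j\) for every \(j\in\NO\). Thus \(\Phi\in\CqDG{(\Gamma_j)_{j=0}^\infty}{\infty}\), so this solution set is non-empty, and \rthm{NLB} (with \(\kappa=\infty\)) yields that \(\Gsinf\) is a \tqqCs{}.

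For the backward direction, assume \(\Gsinf\) is a \tqqCs{}. By \rthm{NLB} (with \(\kappa=\infty\)) there exists some \(\Psi\in\CqD\) with \(\frac{1}{j!}\Psi^{(j)}(0)=\Gamma_j\) for all \(j\in\NO\). Being holomorphic on \(\D\), \(\Psi\) equals its Taylor series centered at \(0\) on all of \(\D\); that is, the series \(\sum_{j=0}^\infty z^j\Gamma_j\) converges on \(\D\) and \(\sum_{j=0}^\infty z^j\Gamma_j=\Psi(z)\) for each \(z\in\D\). Hence the function \(\Phi\) in \eqref{Tsr} is well defined on \(\D\) and coincides with \(\Psi\in\CqD\).

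The only genuine subtlety — the ``main obstacle'' such as it is — is the convergence issue: the hypothesis ``\(\Phi\) defined by \eqref{Tsr}'' must be interpreted so that the series makes sense on \(\D\), and one should note that \rthm{D2.2.2}\eqref{D2.2.2.a} already shows that the Taylor coefficients of any \tqqa{\Car{}} function (after subtracting \(\iu\im\Phi(0)\)) have polynomial growth via \eqref{D2.2.9}, so in particular the radius of convergence is at least \(1\); this is what makes the equivalence in the corollary clean. Everything else is the uniqueness of Taylor coefficients of a holomorphic function plus a citation of \rthm{NLB}.
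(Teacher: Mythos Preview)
Your proof is correct and follows the same approach as the paper, which simply says ``Apply \rthm{NLB}.'' Your version is more explicit in unpacking the Taylor-coefficient identification and in flagging the convergence issue, but the argument is identical in substance.
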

\begin{proof}
  Apply \rthm{NLB}.
\end{proof}

\breml{R1331}
 If \(\kappa \in\NOinf \) and a sequence \(\Gska \) of complex \tqqa{matrices} are given, then it is readily checked that \(\Gska \) is a \tqqCs{} if and only if  the sequence \(\Cska \) defined by
\begin{align}\label{CG}
C_0&\defeq  \re  \Gamma_0&
&\text{and}&
C_j&\defeq  \frac{1}{2}\Gamma_j
\end{align}
 for each \(j\in \mn{1}{\kappa}\) is \tTnnd{}.
\erem

 Let \(\kappa\in\Ninf\), let \(\Gska\) be a sequence of complex \tqqa{matrices}, and let the sequence \(\Cska\) be given by \eqref{CG} for all \(j\in\mn{0}{\kappa}\). If \(k\in\mn{1}{\kappa}\) is such that \(\Cska\) is \tc{} of (minimal) order \(k\), then \(\Gska\) is called \notion{\tCc{} of (minimal) order \(k\)}{C-central!sequence!of order $k$}\index{C-central!sequence!of minimal order $k$}. If there exists a number \(\ell\in\mn{1}{\kappa}\) such that \(\Gska\) is \tCco{\ell}, then \(\Gska\) is simply called \notion{\tCc{}}{C-central!sequence}.

 Let \(n\in\NO\), let \(\Gsn\) be a sequence of complex \tqqa{matrices}, and let the sequence \(\Csn\) be given by \eqref{CG} for all \(j\in\mn{0}{n}\). Let the sequence \((\Gamma_j)_{j=n+1}^\infty\) be given by \(\Gamma_j\defeq2C_j\), where \(\Csinf\) is the \tcsc{\(\Csn\)}. Then \(\Gsinf\) is called the \notion{\tCcsc{\(\Gsn}\)}{C-central!sequence!corresponding to $\Gsn$}.

\begin{remark}\label{R1532}
 Let  \(n\in\NO \) and let \(\Gsn \) be a \tqqCs{}. According to \rremss{R1331}{NR1}, then the \tCcsc{\(\Gsn\)} is a \tqqCs{}.
\end{remark}

 Let \(\Phi\in\CqD\) with Taylor series representation \eqref{Tsr}.
 If \(k\in\N\) is such that \(\Gsinf\) is \tCc{} of (minimal) order \(k\), then \(\Phi\) is called \notion{\tc{} of (minimal) order \(k\)}{central!\Car{} function!of order $k$}\index{central!\Car{} function!of minimal order $k$}. If there exists a number \(\ell\in\N\) such that \(\Phi\) is \tco{\ell}, then \(\Phi\) is simply called \notion{\tc{}}{central!\Car{} function}.

\begin{remark}\label{R1526}
 Let \(n\in\NO\), let \(\Gsn\) be a \tqqCs{}, and let  \(\Gsinf \) be the \tCcsc{\(\Csn \)}. According to \rrem{R1532} and \rcor{C1343}, then \(\Phi\colon\D\to\Cqq\) given by \eqref{Tsr} belongs to \(\CqD\). This function \(\Phi\) is called the \notion{\tcCfc{\(\Gsn\)}}{central!\Car{} function!for $\Gsn$}.
\end{remark}

\begin{remark}\label{NC1}
 Let \(n\in\NO \) and let \(\Csn \) be a \tTnnd{} sequence of complex \tqqa{matrices}. Further, let \(\mu\in\MggqT\). From \rrem{NCC} one can see then that \(\mu\) is the \tcmc{\(\Csn \)} if and only if \(\Phi \colon\D \to\Cqq \) defined by \eqref{DC} is the \tcCfc{the sequence \(\Gsn \)} given by \eqref{GC} for each \(j\in\mn{0}{n}\).
\end{remark}

 Let \(n\in\NO \) and let \(\Gsn \) be a sequence of complex \tqqa{matrices} such that \(\cC_q [\D , \Gsn ] \ne \emptyset\). Then \rthmss{NLB}{D3.4.1} indicate that
\[
 \left\{ \frac{1}{(n+1)!}\Phi^{(n+1)} (0)\colon \Phi\in\cC_q [\D , \Gsn ]\right\}
 = \gK \left(2\Mu{n+1}; \sqrt{2\Lu{n+1}}, \sqrt{2\Ru{n+1}}\right ),
\]
 where \(\Csn \) is given by \eqref{CG} for all \(j\in\mn{0}{n}\) (see also~\cite[Part~I, \cthm{1}]{MR885621I_III_V}).

\begin{remark}\label{R9}
 In the case \(n=0\), \ie{}, if only one complex \tqqa{matrix} \(\Gamma_0\) with \(\re  \Gamma_0\in \Cggq \) is given, the \tcCfc{\(\Gs{0}\)} is the constant function (defined on \(\D \)) with value \(\Gamma_0\) (see~\cite[\crem{1.1}]{MR2104258}).
\end{remark}

 The first and second authors showed in~\zita{MR2104258} that in the general case the \tcCfc{a \tqqCs{} \(\Gs{n}\)} is a rational matrix-valued function and constructed explicit right and left quotient representations with the aid of concrete \tqqa{matrix} polynomials. To recall these formulas, we introduce several matrix polynomials which we use if \(\kappa\in \NOinf \) and a sequence \(\Cska \) of complex \tqqa{matrices} are given.
 
 For all \(m\in\NO \) let the matrix polynomial \(e_m\) be defined by\index{e@$e_m (z)$}
\[%
 e_m (z)
 \defeq\ek{z^0 \Iq , z^1 \Iq , z^2 \Iq ,\dotsc, z^m \Iq }.
\]%

 Let \(\Gamma_0 \defeq  \re  C_0\).  For each \(j\in\mn{1}{\kappa}\), we set \(\Gamma_j \defeq  2C_j\) and \(C_{-j} \defeq  C_j^\ad\). For each \(n\in\mn{0}{\kappa}\), let the matrices \(\Tu{n}\), \(\Yu{n}\) and \(\Sn \) be defined by \eqref{NGL} and \eqref{N11}. Furthermore, for each \(n\in\mn{0}{\kappa} \), let the matrix polynomials \(\an \) and \(\bn \) be given by\index{a@$\an(z)$}\index{b@$\bn(z)$}
\begin{align}\label{N11-1N}
\an  (z)&\defeq  \Gamma_0 + z e_{n-1} (z) \Su{n-1}^\ad \Tu{n-1}^\dagger  \Yu{n}&
\tand{}
\bn  (z)&\defeq  \Iq  - z e_{n-1} (z) \Tu{n-1}^\dagger  \Yu{n}.
\end{align}

 Now we see that central \tqqa{\Car{}} functions admit the following explicit quotient representations expressed by the given data:

\begin{theorem}[{\cite[\cthm{1.2}]{MR2104258}}]\label{FK1.2}
 Let \(n\in\N\), let \(\Gsn \) be a \tqqCs{}, and let \(\Phi\) be the \tcCfc{\(\Gsn \)}. Then the matrix polynomials \(\an \) and \(\bn \) given by \eqref{N11-1N} fulfill \(\det \bn (z)\neq0\) and \(\Phi  (z) = \an  (z) [\bn  (z)]^\inv \) for all \(z\in\D \).
\end{theorem}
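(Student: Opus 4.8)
The plan is to prove the algebraic identity $\Phi(z)\bn(z)=\an(z)$ for all $z\in\D$ by comparing Taylor coefficients, and then---separately---to prove $\det\bn(z)\ne0$ for all $z\in\D$; together these yield $\Phi(z)=\an(z)[\bn(z)]^\inv$ on $\D$. First I would record the following normalizations. By \rrem{R1331} and the definition of the central Carathéodory function, $\Phi(z)=\sum_{j=0}^\infty z^j\Gamma_j$ on $\D$, where $(\Gamma_j)_{j=0}^\infty$ is the C-central sequence corresponding to $\Gsn$; putting $C_0\defeq\re\Gamma_0$ and $C_j\defeq\frac{1}{2}\Gamma_j$ for $j\in\N$, the sequence $\Csinf$ is the central sequence corresponding to $\Csn$, so $C_m=\Mu{m}$ for every integer $m\ge n+1$ and in particular $C_{n+1}=\Mu{n+1}=\Zu{n}\Tu{n-1}^\dagger\Yu{n}$. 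Write $\gamma\defeq\Tu{n-1}^\dagger\Yu{n}$ and split it into $q\times q$ blocks $\gamma=\col(\gamma_k)_{k=1}^n$. Using $e_{n-1}(z)=[\Iq,z\Iq,\dots,z^{n-1}\Iq]$, the definitions \eqref{N11-1N} become $\bn(z)=\Iq-\sum_{k=1}^nz^k\gamma_k$ and $\an(z)=\Gamma_0+\sum_{k=1}^nz^k\delta_k$, where $\col(\delta_k)_{k=1}^n\defeq\Su{n-1}^\ad\gamma$; note $\bn(0)=\Iq$ and $\an(0)=\Gamma_0$.

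Since $\Phi$ and $\an,\bn$ are holomorphic on $\D$, the identity $\Phi\bn=\an$ is equivalent to equality of all Taylor coefficients at $0$. The $z^0$-coefficient is $\Gamma_0$ on both sides. For $m\ge1$, I would compare the $z^m$-coefficient and simplify using $\Gamma_0+\Gamma_0^\ad=2C_0$, $\Gamma_j=2C_j$ for $j\in\N$, and $C_{-j}=C_j^\ad$; a direct computation then shows the identity to be equivalent to the conjunction of (i) for $1\le m\le n$, the relations $C_m=\sum_{k=1}^nC_{m-k}\gamma_k$, which stacked over $m=1,\dots,n$ read exactly $\Tu{n-1}\gamma=\Yu{n}$ (the block rows of $\Tu{n-1}$ arise from recombining the diagonal term $\Gamma_0$ with the off-diagonal $\Gamma_j$- and $\Gamma_j^\ad$-terms); and (ii) for every integer $m\ge n+1$, the finite linear recursion $C_m=\sum_{k=1}^nC_{m-k}\gamma_k=[C_{m-1},C_{m-2},\dots,C_{m-n}]\gamma$, which matches the vanishing of the $z^m$-coefficient of $\an$ (a polynomial of degree at most $n$).

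It then remains to verify (i) and (ii). For (i): by \rrem{R1331} the sequence $\Csn$ is \tTnnd{}, so the block matrix $\Tu{n}=\begin{bmatrix}C_0 & \Yu{n}^\ad \\ \Yu{n} & \Tu{n-1}\end{bmatrix}$ is \tnnH{}; since a \tnnH{} block matrix $\begin{bmatrix}A & B \\ B^\ad & D\end{bmatrix}$ satisfies $\ran{B^\ad}\subseteq\ran{D}$ (this underlies, e.g., the well-definedness of $\Lu{n+1},\Ru{n+1}$ for \tTnnd{} sequences recalled before \rthm{D3.4.1}; see also \cite[\crem{3.4.1}]{MR1152328}), we obtain $\ran{\Yu{n}}\subseteq\ran{\Tu{n-1}}$, hence $\Tu{n-1}\Tu{n-1}^\dagger\Yu{n}=\Yu{n}$, which is (i). For (ii): the case $m=n+1$ is the defining relation $C_{n+1}=\Mu{n+1}=\Zu{n}\Tu{n-1}^\dagger\Yu{n}$, since $\Zu{n}=[C_n,C_{n-1},\dots,C_1]$. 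For $m\ge n+2$ I would argue by induction on $m$: the definition of the central sequence gives $C_m=\Mu{m}=\Zu{m-1}\Tu{m-2}^\dagger\Yu{m-1}$, and one then invokes the special recursion structure of central \tTnnd{} sequences---the stabilization, for $\ell\ge n$, of the matrix-ball parameters $\Lu{\ell+1},\Ru{\ell+1}$ and of the associated range/kernel data of the successive block Toeplitz matrices $\Tu{\ell}$, as recorded in \cite[Part~V, \cthm{32}, p.~303]{MR885621I_III_V} (see also \cite[\cthm{3.4.3}, p.~124]{MR1152328})---to collapse $\Zu{m-1}\Tu{m-2}^\dagger\Yu{m-1}$ to $[C_{m-1},\dots,C_{m-n}]\gamma$, i.e.\ to show that the recursion persists with the \emph{same} matrix coefficients $\gamma_1,\dots,\gamma_n$. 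I expect this inductive step to be the main obstacle: it is precisely the point where centrality (not just the \tTnnd{} property) enters, and making it rigorous needs the shortening/recurrence identities relating consecutive block Toeplitz matrices of a central sequence.

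The remaining point, $\det\bn(z)\ne0$ on $\D$, I would settle via the resolvent matrix of the matricial Carathéodory interpolation problem $\cC_q[\D,\Gsn]$: up to the normalization $\bn(0)=\Iq$, the polynomials $\an$ and $\bn$ are the two block-entries of the column of that $(2q\times2q)$ matrix polynomial which produces the central solution from the trivial parameter, and the $j$-inner character (on $\D$, with $j=\begin{bmatrix}0 & -\iu\Iq \\ \iu\Iq & 0\end{bmatrix}$) of the resolvent matrix forces the denominator of every admissible linear-fractional transformation---$\bn$ in particular---to be invertible throughout $\D$. Equivalently, in the \tTpd{} case $\bn$ differs only by a constant right factor from the $n$-th right orthogonal matrix polynomial of the central measure of \rthm{FK16-N}, whose determinant is classically free of zeros in $\D\cup\T$ (cf.\ the remark after \rthm{FK16-N}); the general \tTnnd{} case is then covered by the same resolvent-matrix argument, which does not require $\Tu{n-1}$ to be invertible. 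Combining this with the algebraic identity yields $\det\bn(z)\ne0$ and $\Phi(z)=\an(z)[\bn(z)]^\inv$ for all $z\in\D$.
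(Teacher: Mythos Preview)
The paper does not prove this theorem; it merely quotes it from \cite[\cthm{1.2}]{MR2104258}, so there is no ``paper's own proof'' to compare against. Your outline is essentially the argument of the cited source, and it is sound; a few remarks on the points you flag as uncertain.

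Your step (ii)---the persistence of the finite recursion $C_m=\sum_{k=1}^nC_{m-k}\gamma_k$ for all $m\ge n+1$ with the \emph{fixed} coefficient column $\gamma=\Tu{n-1}^\dagger\Yu{n}$---is not an obstacle: it is precisely the content of the recurrence theorem for central \tTnnd{} sequences that you already cite (\cite[Part~V, \cthm{32}]{MR885621I_III_V}, equivalently \cite[\cthm{3.4.3}]{MR1152328}). That theorem asserts exactly that a \tTnnd{} sequence is central of order $n+1$ if and only if $C_m=[C_{m-1},\dotsc,C_{m-n}]\,\Tu{n-1}^\dagger\Yu{n}$ for every $m\ge n+1$, so you may invoke it directly rather than setting up an induction of your own. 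With this, together with your step (i) (the range inclusion $\ran{\Yu{n}}\subseteq\ran{\Tu{n-1}}$ for \tTnnd{} sequences), the coefficient comparison for $\Phi\,\bn=\an$ is complete.

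For $\det\bn(z)\ne0$ on $\D$, your resolvent-matrix argument is the one used in \cite{MR2104258}: the pair $(\an,\bn)$ is exactly the right column of the resolvent matrix of the matricial Carath\'eodory problem, and its $j$-contractivity on $\D$ forces every admissible denominator---$\bn$ in particular---to be invertible there. The orthogonal-polynomial route you mention covers only the \tTpd{} case, so the resolvent argument is the one to keep for the general \tTnnd{} situation. With these two ingredients your proof is complete and matches the original.
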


 Observe that further quotient representations of \(\Phi\) are given in~\cite[\cthmss{1.7}{2.3} and \cprop{4.7}]{MR2104258}. 

Obviously, the set
\[%
\Nn  \defeq  \setaa{ v\in\T}{\det \bna{v} = 0}
\]%
\index{n@$\Nn$}is finite. For each \(v\in\Nn \), let \(m_v\) be the multiplicity of \(v\) as a zero of \(\det \bn \). Then \((\det \bn )^{(m_v)} (v) \ne 0\) for each \(v\in\Nn \), so that, for each \(v\in\Nn \), the matrix
\begin{equation}\label{S1}
\Xnv   \defeq  \frac{-m_v}{2v (\det \bn)^{(m_v)} (v)} (\an \bn ^\adj)^{(m_v - 1)} (v)
\end{equation}
\index{x@$\Xnv$}and the matrix-valued functions \(\Dn\colon\C\setminus \Nn \to \Cqq \) given by
\[%
\Dna{z}
\defeq  \sum_{v\in \Nn } \frac{v+z}{v-z} \Xnv  
\]%
\index{d@$\Dna{z}$}and
\begin{equation}\label{S3}
\Lan
\defeq  \an \bn ^\inv  - \Dn
\end{equation}
\index{l@$\Lan$}are well defined.

 \rthm{FK1.2} shows that the \tc{} \Car{} function \(\Phi\) corresponding to a \tqqCs{} \(\Gs{n}\) is a rational matrix-valued function. Thus, combining \rthmss{FK1.2}{M3.2} yields an explicit expression for the \tRHm{} of \(\Phi\).

\begin{theorem}\label{C1}
 Let \(n\in\N\) and let \(\Gsn \) be a \tqqCs{}. Then the \tRHm{}  \(\mu\) of the \tcCfc{\(\Gsn \)} admits the representation
 \begin{equation}\label{C1.B1}
  \mu(B)
  = \frac{1}{2\pi} \int_B \re  \Lana{\zeta} \lebca{\dif\zeta} +  \sum_{v\in\Nn}\Xnv\kron{v} (B)
 \end{equation}
 for all \(B\in\BsaT  \), where \(\Lan\) is given via \eqref{S3} and where \(\lebc\) is the linear Lebesgue measure defined on \(\BsaT\).
\end{theorem}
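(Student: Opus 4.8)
The plan is to combine \rthm{FK1.2} with \rthm{M3.2}. Let $\Phi$ denote the \tcCfc{\(\Gsn\)}. By \rrem{R1526}, $\Phi$ belongs to $\CqD$, and by \rthm{FK1.2} the matrix polynomials $\an$ and $\bn$ from \eqref{N11-1N} fulfill $\det \bn (z) \ne 0$ and $\Phi (z) = \an (z)[\bn (z)]^\inv$ for all $z \in \D$. Hence $\Phi$ is the restriction onto $\D$ of the rational matrix-valued function $\an \bn^\inv$. Moreover, $\bn (0) = \Iq$ by \eqref{N11-1N}, so $\det \bn$ does not vanish identically. Thus the hypotheses of \rthm{M3.2} are met with $P \defeq \an$ and $Q \defeq \bn$, and the first step is simply to invoke that theorem.

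It then remains to rewrite the conclusion of \rthm{M3.2} in the notation fixed in \rsec{S1019} before the statement of \rthm{C1}. Applied with $Q = \bn$, the finite set $\setaa{v \in \T}{\det \bn (v) = 0}$ occurring in \rthm{M3.2} is exactly $\Nn$; for $v \in \Nn$ with multiplicity $m_v$ as a zero of $\det \bn$, the matrix $W_v$ furnished by \rthm{M3.2} is, by its very definition there,
\[
 \frac{-m_v}{2v (\det \bn)^{(m_v)} (v)} (\an \bn ^\adj)^{(m_v - 1)} (v),
\]
which is precisely $\Xnv$ from \eqref{S1}; in particular, \rthm{M3.2} already guarantees that $\Xnv$ is well defined, \tnnH{}, and equal to $\mu(\set{v})$. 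Consequently, the function $\Delta$ from \eqref{T1648.D1} coincides with $\Dn$, and $\Theta = PQ^\inv - \Delta$ coincides with $\Lan$ from \eqref{S3}. Substituting these identifications into the representation \eqref{T1648.B1} asserted by \rthm{M3.2} then yields \eqref{C1.B1}, which finishes the argument.

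I do not expect any real obstacle here. The only substantive ingredient is \rthm{FK1.2}, which provides the quotient representation $\Phi = \an \bn^\inv$ with $\det \bn$ not vanishing identically; everything else is the observation that $\Xnv$, $\Dn$, and $\Lan$, as introduced in \eqref{S1} and \eqref{S3}, are verbatim the specializations to $P = \an$ and $Q = \bn$ of the quantities $W_v$, $\Delta$, and $\Theta$ appearing in \rthm{M3.2} (equivalently, in \rthm{T1648} with $\cD = \C$, $h = \det \bn$, and $G = \an \bn ^\adj$). The single point that requires care is precisely this matching between the general construction and the special notation of \rsec{S1019}.
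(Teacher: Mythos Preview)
Your proposal is correct and follows exactly the paper's own approach: the paper's proof is simply ``Use \rthmss{FK1.2}{M3.2},'' and you have spelled out the straightforward identification of $\Nn$, $\Xnv$, $\Dn$, and $\Lan$ with the corresponding objects $\cN$, $W_v$, $\Delta$, and $\Theta$ in \rthm{M3.2} when $P=\an$ and $Q=\bn$.
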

\begin{proof}
 Use \rthmss{FK1.2}{M3.2}.
\end{proof}

 Now we reformulate \rthm{C1} in the language of central measures.

\begin{theorem}\label{C2}
 Let \(n\in\N\) and let \(\Csn \) be a \tTnnd{} sequence of complex \tqqa{matrices}. Then the \tc{} measure \(\mu\) for \(\Csn \) admits the representation \eqref{C1.B1} for all \(B\in\BsaT  \).
\end{theorem}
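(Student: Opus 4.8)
The plan is to reduce Theorem~\ref{C2} to Theorem~\ref{C1} by passing from the given \tTnnd{} sequence \(\Csn\) to the associated \Car{} data. First I would invoke \rrem{NCC}: define \(\Gsn\) via \eqref{GC} (that is, \(\Gamma_0\defeq C_0\) and \(\Gamma_j\defeq2C_j\) for \(j\in\mn{1}{n}\)). By \rrem{R1331}, since \(\Csn\) is \tTnnd{}, the sequence \(\Gsn\) is a \tqqCs{}, so \rthm{NLB} applies and the objects in \rthm{C1} are all well defined for this \(\Gsn\).

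The second step is to identify the two measures in question. On the one hand, \rthm{C1} produces the \tRHm{} \(\mu\) of the \tcCfc{\(\Gsn\)}, and gives it the explicit representation \eqref{C1.B1}. On the other hand, Theorem~\ref{C2} concerns the \tc{} measure for \(\Csn\), which by \rrem{ZM} (or \rrem{R1526} together with \rrem{NCC}) is the unique measure in \(\MggqT\) whose Fourier coefficients agree with the \tcsc{\(\Csn\)}. The key bridge is \rrem{NC1}: it states precisely that \(\mu\in\MggqT\) is the \tcmc{\(\Csn\)} if and only if the function \(\Phi\) defined by \eqref{DC} (which, by \rrem{NCC}, is the \Car{} function with \tRHm{} \(\mu\)) is the \tcCfc{the \(\Gsn\)} built from \eqref{GC}. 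Hence the \tc{} measure for \(\Csn\) is exactly the \tRHm{} of the \tcCfc{\(\Gsn\)}.

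Combining these identifications, the \tc{} measure \(\mu\) for \(\Csn\) coincides with the measure \(\mu\) of \rthm{C1}, and therefore inherits the representation \eqref{C1.B1} verbatim, with \(\Lan\) given by \eqref{S3} in terms of the matrix polynomials \(\an\), \(\bn\) from \eqref{N11-1N}. This is essentially a one-line proof: ``Combine \rrem{NC1} with \rthm{C1}.''

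There is no real obstacle here; the only point requiring a modicum of care is bookkeeping of the normalizations in \eqref{GC} versus \eqref{CG} — one must check that the \(\Gamma_0\defeq C_0\) used in \rrem{NCC} is consistent with \(\Gamma_0\defeq\re C_0\) used in the definition of the polynomials \(\an,\bn\), which it is because \(C_0=\Tu{0}\) is \tnnH{} and hence \(\re C_0=C_0\). Once that is noted, everything lines up and the theorem follows immediately.
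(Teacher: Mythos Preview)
Your proposal is correct and matches the paper's own proof essentially line for line: the paper simply observes that \(\re C_0=C_0\) and then invokes \rremss{NC1}{NCC} together with \rthm{C1}. Your only addition is making explicit the bookkeeping between \eqref{GC} and \eqref{CG}, which is exactly the content of the paper's phrase ``in view of \(\re C_0=C_0\).''
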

\begin{proof}
 In view of \(\re  C_0 = C_0\), the assertion follows immediately from \rremss{NC1}{NCC} and \rthm{C1}.
\end{proof}

 The following examples show in particular that central measures need neither be continuous with respect to the Lebesgue measure nor be discrete measures.
\begin{exa}[cf.~\rrem{R6-11R}]\label{E1131}
 The sequence \(\Csinf \) given by \(C_0\defeq1\) and \(C_j\defeq0\) for all \(j\in\N\) is obviously \sTnnd{}. Since \(\Mu{1}=0=C_1\) and
\(
 M_{k+1}
 =Z_kT_{k-1}^\dagger Y_k
 =\Ouu{1}{k}\cdot T_{k-1}^\dagger\cdot\Ouu{k}{1}
 =0
 =C_{k+1}
\)
for all \(k\in\N\), it is the \tcsc{\(\Cs{0}\)} and it is \tco{0}. It is readily seen that \(\frac{1}{2\pi}\lebc \) is the \tcmc{\(\Cs{0}\)} and that \(\Phi \colon\D\to\C\) defined by \(\Phi (z)=1\) is the \tcCfc{\(\Gs{0}\)}, where \(\Gamma_0\defeq1\).
\end{exa}

\begin{exa}\label{E1533}
 The sequence \(\Csinf \) given by \(C_j\defeq1\) is obviously \sTnnd{}. Since \(C_1\neq0=\Mu{1}\) and
\(
 M_{k+1}
 =Z_kT_{k-1}^\dagger Y_k
 =\mathbf{1}_k^*\rk{k^{-2}\mathbf{1}_k\mathbf{1}_k^*}\mathbf{1}_k
 =1
 =C_{k+1}
\)
 for all \(k\in\N\), where \(\mathbf{1}_k\defeq\col(1)_{j=1}^k\), it is the \tcsc{\(\Cs{1}\)} and it is \tco{1}. It is readily seen that \(\kron{1}\) is the \tcmc{\(\Cs{1}\)} and that \(\Phi\colon\D\to\C\) defined by \(\Phi(z)=(1+z)/(1-z)\) is the \tcCfc{\(\Gs{1}\)}, where \(\Gamma_0\defeq1\) and \(\Gamma_1\defeq2\).
\end{exa}

\begin{remark}\label{R1432}
 Let \(\kappa\in\NOinf\) and let \(\Cska\) and \((D_j)_{j=0}^\kappa\) be \tTnnd{} sequences of complex \tqqa{matrices} and complex \tppa{matrices}, respectively. Then the sequence \(\diag[C_j,D_j]_{j=0}^\kappa\) is \tTnnd{}.
\end{remark}

\begin{remark}\label{R1420}
 Let \(\kappa\in\Ninf\) and \(k,\ell\in\mn{1}{\kappa}\). Let \(\Cska\) be a sequence of complex \tqqa{matrices} \tco{k} and let \((D_j)_{j=0}^\kappa\) be a sequences of complex \tppa{matrices} \tco{\ell}. Then the sequence \(\diag[C_j,D_j]_{j=0}^\kappa\) is \tco{\max\set{k,\ell}}.
\end{remark}

\begin{exa}\label{E1554}
 In view of \rexass{E1131}{E1533}, one can easily see from \rremss{R1432}{R1420} that the sequence \(\Csinf \) given by \(C_0\defeq\Iu{2}\) and \(C_j\defeq\bigl[\begin{smallmatrix}0&0\\0&1\end{smallmatrix}\bigr]\) for all \(j\in\N\) is \sTnnd{}, \tco{1} and, thus, it coincides with the \tcsc{\(\Cs{1}\)}. It is readily seen that \(\bigl[\begin{smallmatrix}\frac{1}{2\pi}\lebc &0\\0&\kron{1}\end{smallmatrix}\bigr]\) is the \tcmc{\(\Cs{1}\)} and that \(\Phi \colon\D\to\Coo{2}{2}\) defined by \(\Phi (z)=\bigl[\begin{smallmatrix}1&0\\0&(1+z)/(1-z)\end{smallmatrix}\bigr]\) is the \tcCfc{\(\Gs{1}\)}, where \(\Gamma_0\defeq\Iu{2}\) and \(\Gamma_1\defeq\bigl[\begin{smallmatrix}0&0\\0&2\end{smallmatrix}\bigr]\).
\end{exa}

\begin{remark}\label{R1438}
 Let \(\kappa\in\NOinf\), let \(\Cska\) be a \tTnnd{} sequence of complex \tqqa{matrices} and let \(U\) be a unitary \tqqa{matrix}.  Then, formula~\eqref{L1348.0} below shows that the sequence \((U^\ad C_jU)_{j=0}^\kappa\) is \tTnnd{}.
\end{remark}

\begin{exa}\label{E1159}
 Let the sequence \(\Csinf \) be given by \(C_0\defeq\Iu{2}\) and \(C_j\defeq\frac{1}{4}\smat{1&\sqrt{3}\\\sqrt{3}&3}\) for all \(j\in\N\). With the unitary matrix \(U\defeq\frac{1}{2}\smat{\sqrt{3}&-1\\1&\sqrt{3}}\) we have \(C_0=U^\ad\Iu{2}U\) and  \(C_j=U^\ad\tmat{0&0\\0&1}U\) for all \(j\in\N\). In view of \rexa{E1554}, one can then easily see from \rrem{R1438} and \rlemp{L1348}{L1348.c} that the sequence \(\Csinf \) is \sTnnd{}, \tco{1}, and thus it coincides with the \tcsc{\(\Cs{1}\)}. Furthermore, \(\frac{1}{4}\smat{\frac{3}{2\pi}\lebc +\kron{1}&-\sqrt{3}(\frac{1}{2\pi}\lebc -\kron{1})\\-\sqrt{3}(\frac{1}{2\pi}\lebc -\kron{1})&\frac{1}{2\pi}\lebc +3\kron{1}}\) is the \tcmc{\(\Cs{1}\)} and \(\Phi\colon\D\to\Coo{2}{2}\) defined by \(\Phi(z)=\frac{1}{4}\smat{3+\frac{1+z}{1-z}&-\sqrt{3}(1-\frac{1+z}{1-z})\\-\sqrt{3}(1-\frac{1+z}{1-z})&1 +3\frac{1+z}{1-z}}\) is the \tcCfc{\(\Gs{1}\)}, where \(\Gamma_0\defeq\Iu{2}\) and \(\Gamma_1\defeq\frac{1}{2}\smat{1&\sqrt{3}\\\sqrt{3}&3}\).
\end{exa}

\section{The non-stochastic spectral measure of an autoregressive stationary sequence}\label{S1038}       
 Let \(\cH\) be a complex Hilbert space with inner product \(\langle.,.\rangle \). For every choice of \(g =  \col  (g^{(j)})_{j=1}^q\) and \(h = \col  (h^{(j)})_{j=1}^q\) in \(\cH^q\), the \noti{Gramian} \((g,h)\) of the ordered pair \([g,h]\) is defined by \((g,h) =\matauo{\langle  g^{(j)}, h^{(k)}}{j,k=1}{q}\)\index{$(g,h)$}. A sequence \((g_m)_{m=-\infty}^\infty\) of vectors belonging  to \(\cH^q\) is said to be stationary\index{stationary sequence} (in \(\cH^q\)), if, for every choice of \(m\) and \(n\) in \(\Z\), the Gramian \((g_m, g_n)\) only  depends on the difference \(m-n\): \((g_m, g_n) = (g_{m-n}, g_0)\). It is well known that the covariance sequence \((C_m)_{m=-\infty}^\infty\), of an arbitrary stationary sequence \((g_m)_{m=-\infty}^\infty\), given by \(C_m \defeq  (g_m, g_0)\) for each \(m\in\Z\), is \tTnnd{}, \ie{}, that, for each \(m\in\NO \), the block \Toe{} matrix \(\Tu{m} \defeq\matauo{C_{j-k}}{j,k=0}{m}\) is \tnnH{}. According to a matricial version of a famous theorem due to G.~Herglotz (see \rthm{D2.2.1'} above), there exists one  and only one \tnnH{} \tqqa{measure} \(\mu\) defined on the set \(\BsaT  \) of all Borel subsets of the unit circle \(\T  \defeq  \setaa{ \zeta\in\C}{\abs{\zeta} = 1}\) of the complex plane \(\C\)  such that, for each \(j\in\Z\), the \(j\)\nobreakdash-th \tFc{} of \(\mu\) coincides with the matrix \(C_j\). Then \(\mu\) is called the non-stochastic spectral measure of \((g_j)_{j=-\infty}^\infty\)\index{spectral measure!non-stochastic}. A stationary sequence \((g_j)_{j=-\infty}^\infty\) is said to be autoregressive\index{autoregressive sequence} if there is a positive integer \(n\) such that the orthogonal projection \(\hat{g}_n\) of \(g_0\) onto the matrix linear subspace generated by \((g_{-j})_{j=1}^n\) coincides with the  orthogonal projection \(\hat{g}\) of \(g_0\) onto the closed matrix linear subspace generated by \((g_{-j})_{j=1}^\infty\): \(\hat{g}_n = \hat{g}\). If \(\hat{g}\ne 0\), then the smallest positive integer \(n\) with \(\hat{g}_n = \hat{g}\) is called the order of the autoregressive stationary sequence \((g_j)_{j=-\infty}^\infty\)\index{autoregressive sequence!order of}. If \(\hat{g} = 0\), then \((g_j)_{j=-\infty}^\infty\) is said to be autoregressive of order \(0\).

 Now we are going to give an explicit representation of the non-stochastic spectral measure of an arbitrary autoregressive stationary sequence in \(\cH^q\), where we study the general case without any regularity conditions. This representation is expressed in terms of the covariance sequence of the stationary sequence.

 As already mentioned above, the covariance sequence \(\Cbinf \) of an arbitrary stationary sequence \((g_j)_{j=-\infty}^\infty\) in \(\cH^q\) is \tTnnd{}. Observe that, conversely, if the complex Hilbert space \(\cH\) is infinite-dimensional and if an arbitrary \tTnnd{} sequence \(\Cbinf \) of complex \tqqa{matrices} is given, then a matricial version of a famous result due to A.~N.~Kolmogorov~\cite{MR0009098} shows that there exists a stationary sequence \((g_j)_{j=-\infty}^\infty\) in \(\cH^q\) with covariance sequence \(\Cbinf \) (see also~\cite[\cthm{7}]{MR1080924}).

 The interrelation between autoregressive stationary sequences and central measures is expressed by the following theorem:
\begin{theorem}[{\cite[Part~II, \cthm{9}]{MR885621I_III_V}}]\label{FK9}
 Let \(n\in\NO \) and let \((g_j)_{j=-\infty}^\infty\) be a stationary sequence (in \(\cH^q\)) with covariance sequence \(\Cbinf \) and non-stochastic spectral measure \(\mu\). Then the following statements are equivalent:
\begin{enumerate}
 \item[(i)] \((g_j)_{j=-\infty}^\infty\) is autoregressive of order \(n\).
 \item[(ii)] \(\Csinf \) is \tco{n}.
 \item[(iii)] \(\mu\) is \tco{n}.
\end{enumerate}
\end{theorem}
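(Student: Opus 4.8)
The plan is to establish the equivalences along the chain (i) $\Leftrightarrow$ (ii) and (ii) $\Leftrightarrow$ (iii), the second of which is a matter of definitions while the first carries the substance. For (ii) $\Leftrightarrow$ (iii), recall that by the matricial theorem of Herglotz (\rthm{D2.2.1'}) the non-stochastic spectral measure $\mu$ of $(g_j)_{j=-\infty}^\infty$ is precisely the unique element of $\MggqT$ with $\Fcm{j}=C_j$ for all $j\in\NO$. Since, by definition, a measure in $\MggqT$ is \tc{} (of order $n$) exactly when its Fourier coefficient sequence is \tc{} (of order $n$), the statements ``$\Csinf$ is \tco{n}'' and ``$\mu$ is \tco{n}'' simply coincide.

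The core is (i) $\Leftrightarrow$ (ii). For $m\in\NO$ let $\cH_m$ denote the matrix linear subspace generated by $(g_{-j})_{j=1}^m$ (so $\cH_0=\set{0}$), let $\hat g_m$ be the orthogonal projection of $g_0$ onto $\cH_m$, and let $\hat g=\lim_{m\to\infty}\hat g_m$ be the orthogonal projection of $g_0$ onto the closed matrix linear subspace generated by $(g_{-j})_{j=1}^\infty$. First I would carry out two elementary Hilbert-space reductions: since the $\cH_m$ increase to the infinite past, the equality $\hat g_n=\hat g$ is equivalent to $\hat g_m=\hat g_{m+1}$ for every $m\geq n$; and, since $\cH_{m+1}$ is generated by $\cH_m$ together with the single vector $g_{-(m+1)}$ while $g_0-\hat g_m\perp\cH_m$, the equality $\hat g_m=\hat g_{m+1}$ amounts to $(g_0-\hat g_m,g_{-(m+1)})=\Oqq$. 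Hence $\hat g_n=\hat g$ holds if and only if $(g_0-\hat g_m,g_{-(m+1)})=\Oqq$ for all $m\geq n$. Second, I would evaluate this Gramian through the moments: writing $\hat g_m$ as a matrix linear combination of $g_{-1},\dotsc,g_{-m}$ whose coefficients solve the pertinent normal equations — a block \Toe{} system governed by $\Tu{m-1}$, to be solved by means of $\Tu{m-1}^\dagger$ in the rank-deficient case — one arrives at the stochastic interpretation of the centre of the matrix ball of \rthm{D3.4.1}, namely
\[
 (g_0-\hat g_m,g_{-(m+1)})=C_{m+1}-\Mu{m+1},\qquad \Mu{m+1}=\Zu{m}\Tu{m-1}^\dagger\Yu{m},
\]
as recorded in \cite[Part~I]{MR885621I_III_V}. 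Combining the two steps, $\hat g_n=\hat g$ is equivalent to $C_j=\Mu{j}$ for all $j>n$, which, by the recursion \eqref{MLR} and the definitions of \rsec{S1031}, is exactly the assertion that $\Csinf$ is \tco{n}; comparing $\hat g_{n-1}$ with $\hat g$ (and invoking \rrem{R6-11R} for the degenerate case $\hat g=0$, i.e.\ autoregressive of order $0$, which forces $C_j=\Oqq$ for all $j\in\N$) then matches the minimality of the order on both sides.

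I expect the main obstacle to be the second step above, that is, establishing the stochastic interpretation $(g_0-\hat g_m,g_{-(m+1)})=C_{m+1}-\Mu{m+1}$ rigorously in the general, possibly rank-deficient, situation. In that case $\hat g_m$ has no unique representation as a matrix linear combination of $g_{-1},\dotsc,g_{-m}$, so one must argue with the Moore--Penrose inverse $\Tu{m-1}^\dagger$ and the range inclusions built into the matrix ball of \rthm{D3.4.1} (which guarantee that $\Zu{m}\Tu{m-1}^\dagger\Yu{m}$ does not depend on the particular generalized solution chosen) rather than with ordinary inverses. Once this identity is in hand, the three-way equivalence (i) $\Leftrightarrow$ (ii) $\Leftrightarrow$ (iii) follows immediately.
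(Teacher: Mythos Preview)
The paper does not prove \rthm{FK9}; it is stated as a citation of \cite[Part~II, \cthm{9}]{MR885621I_III_V} and used as a black box in the proof of \rthm{C} and in \rrem{ZR}. There is therefore no proof in the present paper to compare your proposal against.

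That said, your outline is the natural one and matches the strategy of the original source: (ii)~$\Leftrightarrow$~(iii) is purely definitional, and (i)~$\Leftrightarrow$~(ii) reduces to the stochastic interpretation of the centre $\Mu{m+1}$ of the matrix ball, namely that the innovation Gramian $(g_0-\hat g_m,g_{-(m+1)})$ equals $C_{m+1}-\Mu{m+1}$. You have correctly located the only genuine technical point, which is to justify this identity via the normal equations with the Moore--Penrose inverse $\Tu{m-1}^\dagger$ when $\Tu{m-1}$ is singular; the range conditions encoded in the \tTnnd{} structure (cf.\ \rthm{D3.4.1}) are exactly what make the expression $\Zu{m}\Tu{m-1}^\dagger\Yu{m}$ independent of the choice of generalised inverse. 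One small point to watch: the paper's phrase ``autoregressive of order $n$'' designates the \emph{minimal} such $n$, whereas ``\tco{$n$}'' in \rsec{S1031} does not require minimality, so when writing up the equivalence you should either read (i) as ``$\hat g_n=\hat g$'' or, if you keep the minimal-order reading, pair it with ``central of minimal order $n$'' on the other side.
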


 Now we are able to formulate the announced representation. 
\begin{theorem}\label{C}
 Let \((g_j)_ {j=-\infty}^\infty\) be a stationary sequence in \(\cH^q\) with covariance sequence \(\Cbinf \) and let \(n\in\N\). Suppose  that \((g_j)_{j=-\infty}^\infty\) is autoregressive of order \(n\). Then \(\Lan\) given by \eqref{S3} is holomorphic at each point \(u\in\T \) and the non-stochastic spectral measure \(\mu\) of \((g_j)_{j=-\infty}^\infty\) admits the representation \eqref{C1.B1}
 for all \(B\in \BsaT  \), where \(\lebc \) is the linear Lebesgue measure defined on \(\BsaT  \), the matrix \(\Xnv \) is given by \eqref{S1}, and \(\kron{v}\) is the Dirac measure defined on \(\BsaT  \) with unit mass at \(v\).
\end{theorem}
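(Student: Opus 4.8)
The plan is to recognise the non-stochastic spectral measure $\mu$ as the \tcmc{\(\Csn\)}, where $\Csn$ denotes the initial segment of the covariance sequence, and then to read off the asserted representation from \rthmss{C2}{M3.2}.

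First I would collect the elementary facts about the data. Since $(g_j)_{j=-\infty}^\infty$ is stationary, its covariance sequence is \tTnnd{}; in particular $\Tu{n}$ is \tnnH{} and $C_{-j}=C_j^\ad$ for $j\in\mn{0}{n}$, so that $\Csn$ is a \tTnnd{} sequence in the sense of \rsec{S1031}, and $\mu$ is, by definition, the unique member of $\MggqT$ with $\Fcm{j}=C_j$ for all $j\in\NO$. By \rthm{FK9}, the hypothesis that $(g_j)_{j=-\infty}^\infty$ is autoregressive of order $n$ is equivalent to $\Csinf$ being \tco{n}, i.e., to $C_j=\Mu{j}$ for every integer $j\geq n$. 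A fortiori $C_j=\Mu{j}$ for every integer $j\geq n+1$, which is precisely the defining property of the \tcsc{\(\Csn\)}; hence, by \rrem{ZM}, $\mu$ is the \tcmc{\(\Csn\)}.

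With this identification, the representation \eqref{C1.B1} for $\mu$ is an immediate consequence of \rthm{C2} applied to $\Csn$. It then remains to justify the assertion that $\Lan$, given by \eqref{S3}, is holomorphic at every $u\in\T$. For this I would retrace the construction underlying \rthmss{C1}{C2}: putting $\Gamma_0\defeq\re C_0=C_0$ and $\Gamma_j\defeq2C_j$ for $j\in\mn{1}{n}$, \rrem{R1331} shows that $\Gsn$ is a \tqqCs{}, \rrem{NC1} shows that the \tcCfc{\(\Gsn\)} is the function $\Phi$ given by \eqref{DC} whose \tRHm{} is $\mu$, and \rthm{FK1.2} yields $\det\bn(z)\ne0$ together with $\Phi(z)=\an(z)[\bn(z)]^\inv$ for all $z\in\D$. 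Since $\bn(0)=\Iq$, the polynomial $\det\bn$ does not vanish identically, so \rthm{M3.2} applies with $P=\an$ and $Q=\bn$. For this choice the objects $\Nn$, $\Xnv$, $\Dn$, and $\Lan=\an\bn^\inv-\Dn$ from \eqref{S1} and \eqref{S3} are exactly the ones occurring in \rthm{M3.2}, whose conclusion states in particular that $\Lan$ is holomorphic in $\cdisk{r}$ for some $r\in(1,+\infty)$; since $\T\subseteq\cdisk{r}$, this is the claimed holomorphy.

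I do not expect a genuine obstacle here, since every ingredient has already been prepared. The only point that requires care is the bookkeeping that links the three descriptions ``$(g_j)_{j=-\infty}^\infty$ autoregressive of order $n$'', ``$\Csinf$ is \tco{n}'', and ``$\mu$ is the \tcmc{\(\Csn\)}'', together with the sign and index conventions of \rsec{S1031} and \rsec{S1019} that make \eqref{C1.B1} the correct formula; these are supplied by \rthm{FK9}, \rrem{ZM}, and \rthm{C2}.
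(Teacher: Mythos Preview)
Your proposal is correct and follows the same route as the paper: invoke \rthm{FK9} to see that \(\Csinf\) is \tco{n}, conclude that \(\mu\) is the \tcmc{\(\Csn\)}, and then apply \rthm{C2}. Your additional paragraph tracing the holomorphy of \(\Lan\) back through \rthmss{FK1.2}{M3.2} is a welcome clarification, since the holomorphy claim is part of the statement of \rthm{C} but is not restated in \rthm{C2} itself; the paper leaves this implicit in its one-line appeal to \rthm{C2}.
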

\begin{proof}
 According to \rthm{FK9}, the sequence \(\Csinf \) is \tco{n} and \(\mu\) is \tco{n}. From the definition of the  non-stochastic spectral measure of \((g_j)_{j=-\infty}^\infty\) we know then that \(\mu\) is the \tcmc{\(\Csn \)}. Consequently, the application of \rthm{C2} completes the proof.
\end{proof}

\begin{remark}\label{ZR}
 Let \((g_j)_{j=-\infty}^\infty\) be a stationary sequence in \(\cH^q\) which is autoregressive of order \(0\). Then the non-stochastic spectral measure \(\mu\) of \((g_j)_{j=-\infty}^\infty\) is given by \(\mu = \frac{1}{2\pi} (g_0, g_0)\lebc \) (see \rthm{FK9} and \rrem{R6-11R}).
\end{remark}

\appendix
\section{Some facts from matrix theory}\label{A1346}
\breml{L1342}
 Let \(A\in\Cpq\). Further, let \(V\in\Coo{m}{p}\) and \(U\in\Coo{q}{n}\) satisfy the equations \(V^\ad V=\Ip\) and \(UU^\ad=\Iq\), respectively. Then \((VAU)^\mpi=U^\ad A^\mpi V^\ad\). 
\erem

\bleml{L1348}
 Let \(\kappa\in\NOinf\) and let \(\Cska\) be a sequence from \(\Cqq\). Let \(U\in\Cqq\) be unitary and let \(C_{j,U}\defeq U^\ad C_jU\) for \(j\in\mn{0}{\kappa}\). For \(j\in\mn{0}{\kappa}\) let \(C_{-j}\defeq C_j^\ad\) and \(C_{-j,U}\defeq C_{j,U}^\ad\).
 \benui
  \il{L1348.a} Let \(n\in\mn{0}{\kappa}\). Let \(\Tn\defeq\matauo{C_{j-k}}{j,k-0}{n}\) and \(\Tu{n,U}\defeq\matauo{C_{j-k,U}}{j,k-0}{n}\). Then
  \begin{equation}\label{L1348.0}
   \Tu{n,U}
   =\ek*{\diag_{n+1}\rk{U}}^\ad\Tn\ek*{\diag_{n+1}\rk{U}}
  \end{equation}
  and
  \begin{equation}\label{L1348.1}
   \Tu{n,U}^\mpi
   =\ek*{\diag_{n+1}\rk{U}}^\ad\Tn^\mpi\ek*{\diag_{n+1}\rk{U}}.
  \end{equation}
  \il{L1348.b} Let \(n\in\mn{0}{\kappa}\). Let \(\Yu{n}\) and \(\Zu{n}\) be given by \eqref{NGL}. Furthermore let \(\Yu{n,U}\) and \(\Zu{n,U}\) be defined by \(\Yu{n,U}\defeq  \col  (C_{j,U})_{j=1}^n\) and \(\Zu{n,U}\defeq\mat{C_{n,U},\dotsc, C_{1,U}}\). Let \(\Mu{1}\), \(\Lu{1}\), and \(\Ru{1}\) be given by \eqref{MLR1}, let \(\Mu{1,U}\defeq\Oqq\), \(\Lu{1,U}\defeq  C_{0,U}\), and let \(\Ru{1,U}\defeq  C_{0,U}\). If \(\kappa\geq1\), then, for each \(n\in\mn{1}{\kappa}\), let \(\Mu{n+1}\), \(\Lu{n+1}\), and \(\Ru{n+1}\) be given via \eqref{MLR}, let
 \begin{align*}
  \Mu{n+1,U}&\defeq  \Zu{n,U}\Tu{n-1,U}^\dagger  \Yu{n,U},&
  \Lu{n+1,U}&\defeq  C_{0,U}- \Zu{n,U} \Tu{n-1,U}^\dagger  \Zu{n,U}^\ad
 \end{align*}
 and
 \[
  \Ru{n+1,U}
  \defeq  C_{0,U}- \Yu{n,U}^\ad \Tu{n-1,U}^\dagger  \Yu{n,U}.
 \]
  For each \(n\in\mn{0}{\kappa}\) then
  \begin{align*}
 \Mu{n+1,U}&=U^\ad\Mu{n+1}U,&
 \Lu{n+1,U}&=U^\ad\Lu{n+1}U,&
 &\text{and}&
 \Ru{n+1,U}&=U^\ad\Ru{n+1}U.
 \end{align*}
  \il{L1348.c} If \(k\in\mn{2}{\kappa}\) and if \(\Cska\) be \tco{k}, then \(\seq{C_{j,U}}{j}{1}{\kappa}\) is \tco{k}.
  \il{L1348.d} If \(k\in\mn{2}{\kappa}\) and if \(\Cska\) be \tcmo{k}, then \(\seq{C_{j,U}}{j}{1}{\kappa}\) is \tcmo{k}.
 \eenui
\elem
\bproof
 Equation~\eqref{L1348.0} is obvious. Since \(U\) is unitary, the matrix \(\diag_{n+1}(U)\) is unitary as well. Thus, in view of \rrem{L1342}, formula~\eqref{L1348.1} is an immediate consequence of \eqref{L1348.0}. \rPart{L1348.a} is proved. Obviously,
 \(
  \Mu{1,U}
  =\Oqq
  =U^\ad\Mu{1}U\).
 Now, let \(n\in\mn{1}{\kappa}\). Then, using~\eqref{L1348.a} and \(\ek{\diag_n\rk{U}}\ek{\diag_n\rk{U}}^\ad=\Iu{nq}\), we get
 \[\begin{split}
  \Mu{n+1,U}
  &=\Zu{n,U}\Tu{n-1,U}^\dagger  \Yu{n,U}\\
  &=\mat{U^\ad C_nU,\dotsc,U^\ad C_1U}\ek*{\diag_n\rk{U}}^\ad\Tu{n-1}^\mpi\ek*{\diag_n\rk{U}}\ek*{\col\seq{U^\ad C_jU}{j}{1}{n}}\\
  &=U^\ad \mat{C_n,\dotsc,C_1}\Tu{n-1}^\mpi\ek*{\col\seq{C_j}{j}{1}{n}}U
  =U^\ad\Zu{n}\Tu{n-1}^\mpi\Yu{n}U
  =U^\ad\Mu{n+1}U.
 \end{split}\]
 Analogously, the remaining assertions of~\eqref{L1348.b} can be shown. The assertions stated in~\eqref{L1348.c} and~\eqref{L1348.d} are an immediate consequence of~\eqref{L1348.b}.
\eproof

\bibliography{151arxiv}
\bibliographystyle{abbrv}%

\vfill\noindent
\begin{minipage}{0.5\textwidth}
 Universit\"at Leipzig\\
Fakult\"at f\"ur Mathematik und Informatik\\
PF~10~09~20\\
D-04009~Leipzig
\end{minipage}
\begin{minipage}{0.49\textwidth}
 \begin{flushright}
  \texttt{
   fritzsche@math.uni-leipzig.de\\
   kirstein@math.uni-leipzig.de\\
   maedler@math.uni-leipzig.de
  } 
 \end{flushright}
\end{minipage}

\end{document}